\documentclass[12pt,a4paper,reqno]{amsart}

\usepackage[T1]{fontenc}
\usepackage{lmodern}
\usepackage{microtype}
\usepackage{fullpage}
\usepackage{amsthm,amsmath,amsfonts,amssymb,mathtools}
\usepackage[colorlinks,citecolor=blue,urlcolor=blue,linkcolor=blue]{hyperref}
\usepackage{graphicx}
\usepackage{tikz}

\numberwithin{equation}{section}

\theoremstyle{plain}
\newtheorem{theorem}{Theorem}[section]
\newtheorem{lemma}[theorem]{Lemma}
\newtheorem{proposition}[theorem]{Proposition}

\theoremstyle{definition}
\newtheorem{definition}[theorem]{Definition}

\newcommand{\Z}{\mathbb Z}
\newcommand{\R}{\mathbb R}
\newcommand{\Pp}{\mathbb P}

\newcommand{\ind}{\mathbf 1}
\newcommand{\cG}{\mathcal G}

\newcommand{\abs}[1]{\lvert#1\rvert}

\newcommand{\dd}{\mathrm d}
\newcommand{\AF}{\mathrm{AF}}
\newcommand{\FM}{\mathrm{FM}}

\title[Phase transition in the antiferromagnetic Ising]{Phase transition in the antiferromagnetic Ising model with external field}

\author{Jianping Jiang}
\address{Yau Mathematical Sciences Center, Tsinghua University, Beijing 100084, China.}
\email{jianpingjiang@tsinghua.edu.cn}
\author{Sike Lang}
\address{Qiuzhen College, Tsinghua University, Beijing 100084, China.}
\email{langsk24@mails.tsinghua.edu.cn}

\begin{document}
	
\begin{abstract}
Consider the nearest-neighbor antiferromagnetic Ising model on $\mathbb{Z}^d$, $d\ge2$, at inverse temperature $\beta$ and in a uniform external field $h$. Let $\beta_c(d)$ be the critical inverse temperature of the zero-field Ising model. We prove that for every $\beta>\beta_c(d)$, the Gibbs state is not unique whenever $|h|$ is sufficiently small.  This extends a previous result of Dobrushin (1968) from sufficiently large $\beta$ to every $\beta>\beta_c(d)$.
\end{abstract}

\maketitle

\section{Introduction and main result}

The nearest-neighbor antiferromagnetic Ising model on a finite set $\Lambda\subset\Z^d$, $d\ge2$, 
at inverse temperature $\beta\ge0$, with boundary condition $\eta\in\{-1,+1\}^{\Z^d}$ and uniform external field $h\in\R$,
 is the probability measure on $\sigma\in\{-1,+1\}^{\Lambda}$ given by
\begin{equation}\label{eq:AF-measure}
 \mu^{\AF,\eta}_{\Lambda,\beta,h}\bigl[\sigma\bigr]
 :=\frac{1}{Z^{\AF,\eta}_{\Lambda,\beta,h}}
 \exp\left[
 -\beta\sum_{\substack{u, v \in \Lambda \\ u \sim v }}\sigma_u\sigma_v
 -\beta\sum_{\substack{u\in\Lambda,\ v\notin\Lambda\\u\sim v}}
          \sigma_u\eta_v
 +h\sum_{u\in\Lambda}\sigma_u
 \right],
\end{equation}
where $u\sim v$ means that $u$ and $v$ are nearest neighbors, and $Z^{\AF,\eta}_{\Lambda,\beta,h}$ is the partition function; each unoriented edge is counted once.  Let $\beta_c(d)$ denote the critical inverse temperature of the zero-field Ising model.

To state the main result, we define the two N\'eel boundary conditions
\begin{equation}\label{eq:Neel-bc}
 \eta^{\mathrm N\pm}_u:=\pm(-1)^{\abs u}, \qquad \abs u:=\sum_{i=1}^d \abs{u_i}, \qquad \forall u\in \mathbb{Z}^d.
\end{equation}
For finite $\Lambda \subset \Z^d$, write
$\mu^{\AF,\mathrm N\pm}_{\Lambda,\beta,h}
:=\mu^{\AF,\eta^{\mathrm N\pm}}_{\Lambda,\beta,h}$.
Let $B_n:=[-n,n]^d\cap\Z^d$.

\begin{theorem}\label{thm:main-AF}
Let $d\ge2$ and $\beta>\beta_c(d)$.  There exist $h_0=h_0(d,\beta)>0$, $c=c(d,\beta)>0$ such that for each $\abs h\le h_0$, 
\begin{equation}\label{eq:finite-gap-AF}
 \mu^{\AF,\mathrm N+}_{B_n,\beta,h}\bigl[\sigma_0=+1\bigr]
 -\mu^{\AF,\mathrm N-}_{B_n,\beta,h}\bigl[\sigma_0=+1\bigr]
 \ge c,
 \qquad \forall n\ge1.
\end{equation}
\end{theorem}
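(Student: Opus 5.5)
The approach is to map the antiferromagnet to a ferromagnet by the standard gauge transformation. The staggered field is then handled with correlation inequalities and a continuity argument in $h$.

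\textbf{Gauge transformation.} Set $\tau_u:=(-1)^{\abs u}\sigma_u$. Since $\Z^d$ is bipartite, $\sigma_u\sigma_v=-\tau_u\tau_v$ for $u\sim v$. The N\'eel boundary condition $\eta^{\mathrm N\pm}$ becomes the constant boundary condition $\pm$. Under this change of variables $\mu^{\AF,\mathrm N\pm}_{B_n,\beta,h}$ becomes the ferromagnetic Ising measure $\mu^{\pm}_{B_n,\beta,\mathbf h}$ with staggered field $\mathbf h_u=h(-1)^{\abs u}$. Because $\abs 0=0$, we have $\sigma_0=\tau_0$. The quantity in \eqref{eq:finite-gap-AF} therefore equals
\[
\tfrac12\bigl(\langle\tau_0\rangle^{+}_{B_n,\beta,\mathbf h}-\langle\tau_0\rangle^{-}_{B_n,\beta,\mathbf h}\bigr).
\]

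\textbf{Reduction to a one-sided bound.} The spin flip $\tau\mapsto-\tau$ maps $(+,\mathbf h)$ to $(-,-\mathbf h)$. Translating by a unit vector $e_1$ maps $-\mathbf h$ back to $\mathbf h$, but $\tau_0$ goes to $\tau_{e_1}$ and the box is shifted. To avoid the box-shift issue, I would instead compare directly with zero field via GKS/FKG-type inequalities. The difference $\langle\tau_0\rangle^{+}-\langle\tau_0\rangle^{-}$ is exactly the quantity controlled by the random-cluster (FK) representation with ``wired'' versus ``free'' behaviour. The cleanest route I see is the Edwards--Sokal/FK coupling with a field: the difference equals the probability, in a suitable coupling, that $0$ is connected to $\partial B_n$. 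Concretely, I would use the monotone coupling of $\mu^{+}$ and $\mu^{-}$ (both with field $\mathbf h$), obtained by Holley's inequality, in which $\tau^+\ge\tau^-$. Disagreement at $0$ then requires a path of disagreement from $0$ to the boundary.

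Alternatively, and this is the plan I would commit to, I would use the Lipschitz dependence on the field:
\[
\Bigl|\frac{\partial}{\partial h}\langle\tau_0\rangle^{\pm}_{B_n,\beta,\mathbf h}\Bigr|
=\Bigl|\sum_{u}(-1)^{\abs u}\langle\tau_0;\tau_u\rangle^{\pm}\Bigr|.
\]
One would like to bound this uniformly in $n$ for $\abs h\le h_0$. At zero field, $\beta>\beta_c$, with $+$ boundary condition, truncated correlations decay exponentially in every $d\ge2$ (Duminil-Copin--Goswami--Raoufi for $d\ge3$; known in $d=2$). Hence the derivative is bounded at $h=0$.

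\textbf{Main obstacle.} The hard step is keeping exponential decay of truncated correlations uniform in $n$ and in small staggered fields. The staggered field is not monotone, so GKS and Lebowitz inequalities do not directly compare it with $h=0$. I would try a perturbative argument based on FK/random-current representations: write $\langle\tau_0;\tau_u\rangle_{\mathbf h}$ in the random current representation with a ghost vertex, and bound it by a zero-field quantity times $e^{C\abs h}$ per site. This would be unwieldy. A more robust choice is disagreement percolation in the Holley coupling of $\mu^+_{\mathbf h}$ and $\mu^+_{0}$. Disagreements are driven by the field with density $O(\abs h)$. Each such disagreement propagates only through connected regions of the zero-field $+$ phase where truncated correlations decay, giving $\abs{\langle\tau_0\rangle^+_{\mathbf h}-\langle\tau_0\rangle^+_{0}}\le C\abs h$ uniformly in $n$. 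The same bound holds for the $-$ boundary condition. Combining with the zero-field gap $\langle\tau_0\rangle^{+}_{B_n,\beta,0}-\langle\tau_0\rangle^{-}_{B_n,\beta,0}\ge 2m^*(\beta)>0$, valid for all $n$ by monotonicity in $n$, gives \eqref{eq:finite-gap-AF} with $c=m^*(\beta)/2$ once $h_0\le m^*/(4C)$.
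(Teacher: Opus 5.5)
There is a genuine gap, and it sits exactly at the step you call the main obstacle. The gauge transformation, the rewriting as $\tfrac12(\langle\tau_0\rangle^+-\langle\tau_0\rangle^-)$, and the final bookkeeping are fine. But the whole argument rests on the uniform-in-$n$ estimate $\abs{\langle\tau_0\rangle^{\pm}_{B_n,\beta,\mathbf h}-\langle\tau_0\rangle^{\pm}_{B_n,\beta,0}}\le C\abs h$, and nothing you propose proves it.

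\begin{itemize}
\item Exponential decay of truncated correlations at $h=0$ bounds the derivative $\sum_u(-1)^{\abs u}\langle\tau_0;\tau_u\rangle^{\pm}$ only at $h=0$. Integrating the derivative over $[0,h]$ needs the same summability at every $\abs{h'}\le h_0$, uniformly in $n$. That amounts to knowing that a small staggered field does not destabilize the plus phase, which is the content of the theorem.
\item The Holley coupling is not available, because $\mu^+_{\mathbf h}$ and $\mu^+_0$ are not stochastically ordered: the staggered field pushes even spins up and odd spins down.
\item For a general coupling, van den Berg--Maes-type domination of the disagreement set needs the single-site conditional laws to be uniformly close over neighbouring configurations. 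This fails for $\beta>\beta_c$, so disagreement clusters are not subcritical; it is a high-temperature tool.
\item Your heuristic that disagreements ``are created with density $O(\abs h)$'' and die out in the plus phase conflates weak mixing (decay of truncated correlations) with insensitivity to a perturbation acting on every site. That perturbation has total cost of order $\abs h\,\abs{B_n}$, and you need a mechanism that converts this volume cost into a local one.
\item Plain monotonicity does not rescue it. Bounding the staggered field below by the uniform field $-\abs h$ only gives $\langle\tau_0\rangle^+_{B_n,\beta,-\abs h}$, which is negative for large $n$.
\end{itemize}

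The paper supplies this missing mechanism, and it does so non-perturbatively. It proves reflection positivity of the Edwards--Sokal joint spin-bond measure on the torus. The chessboard estimate then turns the global Radon--Nikodym factor $e^{2\abs hL^d}$ into a per-block cost $2\abs h(2q)^d$. When $\abs h\le h_q$ this cost is beaten by the zero-field per-block gain $c_1q$, giving $\Pp_{L,\beta,h}[Q_v\text{ bad for all }v\in S]\le r_q^{\abs S}$ uniformly in $L$. This produces an infinite good-block mesh cluster. Conditioning on its sign gives a Gibbs state $\widehat\mu^+_{\beta,h}\preceq\mu^{\FM,+}_{\beta,h}$ in which an increasing positive-crossing observable has expectation at least $1-2\delta_q$.

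No Lipschitz estimate is proved or needed. The paper shows only $\mu^{\FM,\pm}_{\beta,h}\Rightarrow\mu^{\FM,\pm}_{\beta,0}$, identifying subsequential limits through the classification of translation-invariant zero-field Gibbs measures. It then reaches the boxes $B_n$ by monotonicity in the boundary condition together with the infinite-volume symmetry $\langle\sigma_0\rangle^{\FM,\pm}_{\beta,h}=-\langle\sigma_{e_1}\rangle^{\FM,\mp}_{\beta,h}$. That symmetry also disposes of the box-shift issue you noticed.
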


In Section \ref{sec:ferromagnetic-formulation}, we will see that $\mu^{\AF,\mathrm N +}_{B_n,\beta,h}$ and $\mu^{\AF,\mathrm N -}_{B_n,\beta,h}$ converge weakly to the infinite-volume Gibbs states $\mu^{\AF,\mathrm N +}_{\beta,h}$ and $\mu^{\AF,\mathrm N -}_{\beta,h}$ respectively. Theorem \ref{thm:main-AF} implies that $\mu^{\AF,\mathrm N +}_{\beta,h} \neq \mu^{\AF,\mathrm N -}_{\beta,h}$ and thus there are at least two distinct Gibbs states at such $(\beta,h)$. 

If $h=0$, a flip on all odd vertices of $\Z^d$ (i.e., those with $\abs u$ odd) turns the model into the classical zero-field ferromagnetic Ising model. For general $h$, Dobrushin \cite{Dob68} first proved the nonuniqueness of Gibbs states when $\beta$ is very large and $\abs h$ small by a Peierls-type argument.

By \cite[Corollary 1.6]{DSS23} and uniqueness of Gibbs states when $\beta \in [0,\beta_c(d)]$ and $h=0$ \cite{ABF87,Yan52,AF86,ADCS15}, there is always uniqueness whenever $\beta \in [0,\beta_c(d)]$ and $h \in \R$. It is also known that for each $\beta \geq 0$, uniqueness holds whenever $\abs h$ is sufficiently large; see, e.g., \cite[Theorem~5]{CJN18}. The antiferromagnetic Ising model has been a prototypical example and a test case for new ideas and techniques for different uniqueness criteria \cite{DKS85,vdB93,vdBM94,GLM95,FGNPY21}. See Figure~\ref{fig:phase-diagram} for the phase diagram.

\begin{figure}[t]
	\centering
	\begin{tikzpicture}[x=1.02cm,y=0.9cm]
		\fill[orange!10]
		(2.15,0)
		.. controls (2.55,1.10) and (5.35,2.25) .. (8.0,2.75)
		-- (8.0,-2.75)
		.. controls (5.35,-2.25) and (2.55,-1.10) .. (2.15,0)
		-- cycle;
		\fill[blue!70]
		(2.15,0)
		.. controls (2.75,0.12) and (5.25,0.30) .. (8.0,0.40)
		-- (8.0,-0.40)
		.. controls (5.25,-0.30) and (2.75,-0.12) .. (2.15,0)
		-- cycle;
		\draw[densely dashed,very thick]
		(2.15,0) .. controls (2.55,1.10) and (5.35,2.25) .. (8.0,2.75);
		\draw[densely dashed,very thick]
		(2.15,0) .. controls (2.55,-1.10) and (5.35,-2.25) .. (8.0,-2.75);
		\draw[->,thick] (0,0) -- (8.35,0) node[below right] {$\beta$};
		\draw[->,thick] (0,-3.0) -- (0,3.1) node[above left] {$h$};
		\draw (2.15,0.10) -- (2.15,-0.10)
		node[below] {$\beta_c(d)$};
		\node at (5.15,0.65) {nonuniqueness};
		\node at (5.30,2.65) {uniqueness};
		%\node[blue!70!black] at (6.65,-0.55) {proved here};
	\end{tikzpicture}
	\caption{Schematic expected phase diagram for a fixed $d\ge2$.
		The dashed boundary is conjectural.  The blue region indicates
		nonuniqueness proved in Theorem \ref{thm:main-AF}; its width is not quantitative.}
	\label{fig:phase-diagram}
\end{figure}

The concept of antiferromagnetic order was first introduced by N\'eel \cite{Nee32} and Landau~\cite{Lan33} in their respective studies of the low-temperature magnetic behavior of certain materials---metals such as Cr and Mn in N\'eel's case, and insulators such as the chlorides of Cr, bivalent Fe, Co and Ni in Landau's case.

Theorem \ref{thm:main-AF} should be contrasted with the corresponding results in the random field Ising model. Imry and Ma \cite{IM75} predicted that quenched disorder destroys long-range order in dimensions $d\leq 2$. For $d=2$, at every positive temperature, Aizenman and Wehr \cite{AW90} proved almost sure uniqueness for a broad class of i.i.d. random fields,  including symmetric bimodal fields $\pm h,  h>0$. For Gaussian disorder, if $d=2$, correlations decay exponentially whenever the disorder strength is nonzero~ \cite{DX21,AHP20}; for $d\geq 3$, a recent breakthrough by Ding, Liu and Xia \cite{DLX24} proved that sufficiently weak disorder preserves phase coexistence whenever $\beta>\beta_c(d)$.

 A main feature of our method is to study the joint spin-bond measure arising from the Edwards-Sokal coupling \cite{ES88,SW87} throughout the whole proof. We first prove that the joint measure is reflection positive; we note that reflection positivity for the spin measure has been used in \cite{FILS80,GNPY16}.  The resulting chessboard estimate, together  with
supercritical FK coarse graining \cite{Pis96,Bod05,CM04}, enables us to prove that the probability of all blocks indexed by a prescribed set $S$ being bad decays exponentially in the size of $S$. This implies that there is an infinite FK mesh cluster, and we use it to define Gibbs states conditioned on the sign of the FK mesh cluster. Using an increasing crossing observable and the
classification of translation-invariant zero-field Gibbs measures \cite{Aiz80,Hig81,Bod06}, we prove continuity of the N\'eel boundary states at zero field. Their distinct zero-field limits then imply nonuniqueness for sufficiently small $\abs h$.

After introducing the notation and preliminary results in Section~\ref{sec:preliminaries}, we establish the main probabilistic
estimate in Section~\ref{sec:chessboard}: a uniform bound on bad-block probabilities in a small staggered field. Section~\ref{sec:infinite-volume} constructs Gibbs states by conditioning on the sign of an infinite FK mesh cluster. In Section~\ref{sec:continuity}, these states give a lower bound on the probability of a positive FK crossing in the maximal state. This estimate identifies its zero-field limit and proves continuity, from which the uniform lower bounds in Theorem \ref{thm:main-AF} follow.

\section{Preliminaries}\label{sec:preliminaries}
For a probability measure $\mu$, write $\langle f\rangle_\mu:=\int f\,\dd\mu$
for expectation and $\mu[A]$ for the probability of an event $A$.
For the Ising measures, we often abbreviate
$\langle f\rangle_{\mu^{\AF,\eta}_{\Lambda,\beta,h}}$
as $\langle f\rangle^{\AF,\eta}_{\Lambda,\beta,h}$,
with the same convention for the ferromagnetic and infinite-volume measures.   $\Lambda \Subset \Z^d$ means $\Lambda$ is a finite subset, and 
\[E(\Lambda):=\{uv: u \sim v, u, v \in \Lambda\}, \qquad \bar{E}(\Lambda):=\{uv: u \sim v, \{u,v\} \cap \Lambda \neq \emptyset\}\]
are, respectively, the sets of nearest-neighbor edges in $\Lambda$ and those with nonempty intersection with $\Lambda$. For probability measures on the configuration spaces considered in the paper, a sequence  $(\mu_n)$ converges weakly to $\mu$ if 
\[ \lim_{n \rightarrow \infty} \mu_n[A]=\mu[A] \text{ for each local event } A. \]
See \cite[Section 6.4.2]{FV18} for more details.

\subsection{The ferromagnetic formulation}\label{sec:ferromagnetic-formulation}
For each $\sigma \in \{-1,+1\}^{\Z^d}$, we define $\widehat{\sigma}$ by $\widehat\sigma_u=(-1)^{\abs u}\sigma_u$ (i.e., flip all spins on odd vertices). Using this transformation, we turn the antiferromagnetic Ising model defined in \eqref{eq:AF-measure} into the following ferromagnetic Ising model with staggered field.
\begin{equation}\label{eq:FM-measure}
 \mu^{\FM,\eta}_{\Lambda,\beta,h}\bigl[\sigma\bigr]
 :=\frac{1}{Z^{\FM,\eta}_{\Lambda,\beta,h}}
 \exp\left[
 \beta\sum_{uv\in E(\Lambda)}\sigma_u\sigma_v
 +\beta\sum_{\substack{u\in\Lambda,\ v\notin\Lambda\\u\sim v}}
          \sigma_u\eta_v
 +h\sum_{u\in\Lambda}(-1)^{\abs u}\sigma_u
 \right].
\end{equation}
More precisely, we have
\[Z^{\AF,\eta}_{\Lambda,\beta,h}=Z^{\FM,\widehat{\eta}}_{\Lambda,\beta,h}, \qquad \mu^{\AF,\eta}_{\Lambda,\beta,h}\bigl[\sigma\bigr]=\mu^{\FM,\widehat{\eta}}_{\Lambda,\beta,h}\bigl[\widehat{\sigma}\bigr], \qquad \forall \sigma \in \{-1,+1\}^{\Lambda}.\]
In particular,  the N\'eel labels $\mathrm N\pm$ in the antiferromagnetic model correspond to the constant boundary labels $\pm$ in the ferromagnetic model.

By monotonicity in the boundary condition, the weak limits
\begin{equation}\label{eq:max-min}
 \mu^{\FM,+}_{\beta,h}
 :=\lim_{\Lambda\uparrow\Z^d}\mu^{\FM,+}_{\Lambda,\beta,h},
 \qquad
 \mu^{\FM,-}_{\beta,h}
 :=\lim_{\Lambda\uparrow\Z^d}\mu^{\FM,-}_{\Lambda,\beta,h}
\end{equation}
exist.  Moreover, both $ \mu^{\FM,+}_{\beta,h}$ and $\mu^{\FM,-}_{\beta,h}$ are extremal Gibbs states, translation invariant under the even sublattice $\{u \in \Z^d: \abs u \text{ even}\}$, and all other Gibbs measures lie in between $ \mu^{\FM,+}_{\beta,h}$ and $\mu^{\FM,-}_{\beta,h}$ in the FKG sense. The limits \eqref{eq:max-min} and their properties are standard for the ferromagnetic Ising model with uniform external field (see, e.g., \cite[Theorem 3.17 and Lemma 6.65]{FV18}); their proofs adapt readily to the case of a staggered field. 

The transformation $\sigma_u\mapsto-\sigma_{u+e_i}$ exchanges the
maximal and minimal states. Consequently,
\begin{equation}\label{eq:m-symmetry}
 \langle \sigma_0 \rangle^{\FM,\pm}_{\beta,h}=- \langle \sigma_{e_1} \rangle^{\FM,\mp}_{\beta,h}.
\end{equation}

To prove Theorem~\ref{thm:main-AF}, it suffices to establish the following equivalent ferromagnetic statement.

\begin{proposition}\label{prop:main-FM}
For $d\ge2$ and $\beta>\beta_c(d)$, there exist $h_0=h_0(d,\beta)>0,c=c(d,\beta)>0$ such that for each $|h|\leq h_0$,
\begin{equation}\label{eq:FM-main-gap}
\mu^{\FM,+}_{B_n,\beta,h}\bigl[\sigma_0=+1\bigr]
-\mu^{\FM,-}_{B_n,\beta,h}\bigl[\sigma_0=+1\bigr]
\ge c,
\qquad \forall n\ge1.
\end{equation}
\end{proposition}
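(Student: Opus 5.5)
The plan is to reduce the uniform-in-$n$ finite-volume gap to a statement about the infinite-volume extremal states, and then get that statement from continuity at $h=0$.

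\textbf{Reduction to infinite volume.} By monotonicity in the boundary condition (FKG), $\mu^{\FM,+}_{B_n,\beta,h}[\sigma_0=+1]$ is nonincreasing in $n$ and converges to $\mu^{\FM,+}_{\beta,h}[\sigma_0=+1]$. Likewise $\mu^{\FM,-}_{B_n,\beta,h}[\sigma_0=+1]$ is nondecreasing and converges to $\mu^{\FM,-}_{\beta,h}[\sigma_0=+1]$. Hence the left side of \eqref{eq:FM-main-gap} is bounded below by
\[
\tfrac12\bigl(\langle\sigma_0\rangle^{\FM,+}_{\beta,h}-\langle\sigma_0\rangle^{\FM,-}_{\beta,h}\bigr)
=\tfrac12\bigl(\langle\sigma_0\rangle^{\FM,+}_{\beta,h}+\langle\sigma_{e_1}\rangle^{\FM,+}_{\beta,-h}\bigr)
\]
for every $n$, where the equality uses \eqref{eq:m-symmetry} together with the symmetry $\sigma\mapsto-\sigma$, $h\mapsto-h$. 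At $h=0$ this equals $m^*(\beta)>0$, since $\beta>\beta_c(d)$. It therefore suffices to prove the continuity statement
\[
\lim_{h\to0}\langle\sigma_u\rangle^{\FM,+}_{\beta,h}=m^*(\beta)\qquad\text{for } u\in\{0,e_1\}.
\]
Then one may take $c=m^*(\beta)/4$ and choose $h_0$ accordingly.

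\textbf{Continuity.} Upper semicontinuity is essentially free. Weak limits of $\mu^{\FM,+}_{\beta,h}$ as $h\to0$ are Gibbs at $(\beta,0)$ and invariant under the even sublattice, so they are dominated by $\mu^{\FM,+}_{\beta,0}$. The real issue is the lower bound, since for $h\ne0$ the field has sign $-h$ on one sublattice and monotonicity in $h$ is lost.

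For the lower bound, I would use the Edwards--Sokal coupling for the staggered-field model. Take any subsequential limit $\nu$ of the joint spin--bond measures as $h\to0$. Its spin marginal is a zero-field Gibbs state that is periodic under the even sublattice. By the Aizenman--Higuchi--Bodineau classification, it is a mixture $\lambda\mu^{+}_{\beta,0}+(1-\lambda)\mu^{-}_{\beta,0}$. It remains to exclude any $\mu^-$ weight, i.e.\ to show $\lambda=1$.

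To do this, I would find an increasing local observable, for instance the event that the origin is joined by an open FK path of $+$ spins to distance $L$. This event should have two properties:
\begin{enumerate}
\item its $\mu^{\FM,+}_{\beta,h}$-probability is bounded below uniformly in small $h$ by some $p_L$;
\item $p_L$ is strictly larger than its $\mu^{-}_{\beta,0}$-probability.
\end{enumerate}
Together these force $\lambda=1$.

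\textbf{Main obstacle.} The uniform lower bound for the crossing probability is the hard step. My plan is a Pisztora-style coarse graining of the FK clusters at scale $L$, which gives "good blocks" at $h=0$ with probability close to $1$ when $\beta>\beta_c$ (slab results). I would transfer this to small staggered $h$ by proving reflection positivity of the joint spin--bond measure, which is plausible because the staggered field respects reflections through planes between lattice sites. The chessboard estimate then bounds the probability that all blocks in a set $S$ are bad by $\epsilon(h)^{|S|}$, with $\epsilon(h)$ small uniformly for $|h|\le h_0$. A Peierls argument then yields, with high probability, an infinite mesh cluster of good blocks surrounding and touching the origin. Under plus boundary conditions this cluster should carry $+$ spin, which gives the crossing bound.

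I expect the delicate points to be the following:
\begin{itemize}
\item making the chessboard bound uniform in $h$, which requires controlling the block event on the torus by comparison with $h=0$ using $\abs h\cdot\abs{\text{block}}$ small;
\item showing that the sign of the infinite mesh cluster is $+$ in the plus state; here I would condition on the sign to build two Gibbs states and use FKG to identify the plus one.
\end{itemize}
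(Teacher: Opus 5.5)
Your architecture matches the paper's. Boundary monotonicity reduces the claim to the infinite-volume gap. A zero-field limit $\nu$ of $\mu^{\FM,+}_{\beta,h}$ is identified through the Aizenman--Higuchi/Bodineau classification. The $\mu^{\FM,-}_{\beta,0}$ component is then excluded by an increasing local FK-crossing observable, whose lower bound comes from reflection positivity of the joint measure, the chessboard estimate, zero-field coarse graining, a Peierls argument, conditioning the torus limit on the mesh sign, and FKG.

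The closing step, however, has a genuine gap: properties (1) and (2) do not force $\lambda=1$. Take a single increasing $g$ with $\langle g\rangle_{\mu^{\FM,+}_{\beta,h}}\ge p_L>a^-$, where $a^\pm:=\langle g\rangle_{\mu^{\FM,\pm}_{\beta,0}}$. This only gives $\lambda\ge(p_L-a^-)/(a^+-a^-)$, which need not exceed $1/2$, and $\lambda=1/2$ would make the infinite-volume gap tend to $0$ along the subsequence. Your example event also cannot carry a bound near $1$, since for every $h$ the origin is isolated with probability at least $(1-p)^{2d}$.

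What is needed is a family $g_q$, indexed by the block scale, with $\langle g_q\rangle_{\mu^{\FM,+}_{\beta,h}}\ge1-2\delta_q$ for $\abs h\le h_q$ and $\langle g_q\rangle_{\mu^{\FM,-}_{\beta,0}}\le\epsilon_q$, where $\delta_q,\epsilon_q\to0$. Then $1-2\delta_q\le\lambda+(1-\lambda)\epsilon_q$, and letting $q\to\infty$ gives $\lambda=1$. The paper uses $f_q=\widehat\Pp^{\,+}_{\beta,h}[C_q^+\mid\sigma_{Q_0}]$, the conditional probability of a $+$-spin open left--right crossing of the block $Q_0$. This is an $h$-independent increasing spin function, because given the spins the bonds are independent Bernoulli$(p\ind_{\{\sigma_u=\sigma_v\}})$ under every measure involved.

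This route needs two inputs your plan does not supply:
\begin{itemize}
\item $\Pp_{\beta,h}[A_+]=\frac12$, obtained from the symmetry $\sigma_u\mapsto-\sigma_{u+qe_1}$ with $q$ odd. This is what transfers $\Pp_{\beta,h}[0\notin\cG_\infty]\le\delta_q$ to the $+$-conditioned state.
\item For $\epsilon_q$, exponential decay of the radius of finite clusters in the supercritical wired FK measure (from the slab results for $d\ge3$, and from duality for $d=2$). This is needed because under $\mu^{\FM,-}_{\beta,0}$ every $+$ crossing lies in a finite cluster.
\end{itemize}

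There are also three smaller corrections. First, the staggered field is invariant under reflections through planes \emph{containing} lattice sites, since these preserve the parity of $\abs u$. Reflections through planes between sites reverse $(-1)^{\abs u}$, and compensating with a spin flip makes the coupling across the plane antiferromagnetic, so reflection positivity is lost. The paper accordingly uses site reflections, encoding spins and bonds as site variables on $\mathbb T_{2L}$.

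Second, since $T_{e_1}\mu^{\FM,+}_{\beta,h}=\mu^{\FM,+}_{\beta,-h}$, your right-hand side actually equals $\langle\sigma_0\rangle^{\FM,+}_{\beta,h}$. The correct identity uses $\langle\sigma_{e_1}\rangle^{\FM,+}_{\beta,h}$. This is harmless, because you prove continuity for both $u$.

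Third, for $d\ge3$ Bodineau's classification concerns fully translation-invariant states. You should apply it to $\frac12(\nu+T_{e_1}\nu)$ and then use extremality of $\mu^{\FM,+}_{\beta,0}$, as the paper does.
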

In contrast, it is well known that the ferromagnetic Ising model with uniform external field $h\neq 0$ has a unique Gibbs state regardless of the value of $\beta$; see, e.g., \cite[Section~3.7.4]{FV18}.

\subsection{The joint spin-bond measure}
The Edwards-Sokal coupling \cite{ES88,SW87} is a probability measure on a common probability space for the Ising model and FK percolation (also called the random-cluster model with cluster weight $2$). We refer to \cite{CJN18} for the coupling with vertex-dependent external fields, constructed by adding ghost vertices; in this paper, we use its marginal on $\Z^d$. For a finite $\Lambda \Subset \Z^d$ and boundary condition $\eta \in \{-1,+1\}^{\Z^d}$, the joint spin-bond measure is
\begin{equation}\label{eq:ES-measure-general}
	\Pp_{\Lambda,\beta,h}^{\eta}\bigl[\sigma,\omega\bigr] \propto \exp[h \sum_{u\in \Lambda} (-1)^{\abs u} \sigma_u] \prod_{e=uv \in \bar{E}(\Lambda)} [(1-p) \delta_{\omega_e,0} + p \delta_{\omega_e,1} \delta_{\sigma_u,\sigma_v}],
\end{equation}
where $(\sigma,\omega) \in \{-1,+1\}^{\Lambda} \times \{0,1\}^{\bar{E}(\Lambda)}$, $p:=1-e^{-2\beta}$, $\sigma_v=\eta_v$ for each $v\notin \Lambda$, and $\delta_{i,j}$ is the Kronecker delta. The spin marginal of $\Pp_{\Lambda,\beta,h}^{\eta}$ is $ \mu^{\FM,\eta}_{\Lambda,\beta,h}$, defined in \eqref{eq:FM-measure}. The bond marginal of the joint measure depends on $h$; in the torus case \eqref{eq:ES-measure}, it reduces at $h=0$ to the FK percolation measure. Conditional on the spins, bonds joining equal spins are independently open with probability $p$, and all other bonds are closed.

Fix an even integer $L$ and let $\mathbb T_L:=(\Z/L\Z)^d$.  The joint spin-bond measure on $\mathbb{T}_L$ is simply written as
\begin{equation}\label{eq:ES-measure}
 \Pp_{L,\beta,h}\bigl[\sigma,\omega\bigr]
\propto \exp[h \sum_{u\in \mathbb{T}_L} (-1)^{\abs u} \sigma_u] \prod_{e=uv \in E(\mathbb{T}_L)} [(1-p) \delta_{\omega_e,0} + p \delta_{\omega_e,1} \delta_{\sigma_u,\sigma_v}],
\end{equation}
where $(\sigma,\omega) \in \{-1,+1\}^{\mathbb{T}_L} \times \{0,1\}^{E(\mathbb{T}_L)}$.

\subsection{The zero-field coarse-graining input}

Choose an odd block scale $q$.  For $v\in\Z^d$, set
\begin{equation}\label{eq:blocks}
 Q_v:=qv+[-q,q]^d \cap \Z^d,
 \qquad
 R_v:=qv+[-2q,2q]^d \cap \Z^d.
\end{equation}

\begin{definition}\label{def:good-block}
A block $Q_v$ is \emph{good} if the restriction of the FK configuration to $E(Q_v)$ contains a cluster touching all $2d$ faces of $Q_v$, and every open path in $Q_v$ with $\ell^\infty$-diameter at least $q$ belongs to that cluster.  Otherwise, $Q_v$ is \emph{bad}.
\end{definition}

Write $\phi^\xi_{\Lambda,\beta}$ for the zero-field FK percolation measure on a finite set $\Lambda \Subset \Z^d$
with edge parameter $p=1-e^{-2\beta}$ and boundary condition $\xi$ (i.e., a partition of
the boundary vertices specifying which are wired).
The zero-field input is the following coarse-graining estimate.

\begin{proposition}\label{prop:zero-field-block}
Let $d\ge2$ and $\beta>\beta_c(d)$.  There exist $c_0=c_0(d,\beta) \in (0,\infty)$ and $q_0=q_0(d,\beta) \in (0,\infty)$ such that, for every integer $q\ge q_0$ and every FK boundary condition $\xi$ on $R_v$,
\begin{equation}\label{eq:zero-field-block}
 \phi^{\xi}_{R_v,\beta}\bigl[Q_v\text{ is bad}\bigr]\le e^{-c_0q}.
\end{equation}
\end{proposition}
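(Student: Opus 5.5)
This estimate should come from the supercritical FK coarse-graining literature: Pisztora \cite{Pis96} for $d\ge3$, Bodineau \cite{Bod05} for slab percolation up to $\beta_c$, and Couronn\'e--Messikh \cite{CM04} for $d=2$. The plan is to reduce to the worst boundary condition and then quote the known uniform block estimate.

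\emph{Step 1: reduction to free boundary conditions.} The event "$Q_v$ is bad" is not monotone, so comparison between boundary conditions does not apply to it directly. We split it into two monotone pieces.
\begin{itemize}
\item Badness implies either the decreasing event $D$ that there is no open cluster in $E(Q_v)$ crossing all $2d$ faces,
\item or the event $A$ that two disjoint open clusters in $Q_v$ both have $\ell^\infty$-diameter at least $q$ (with one of them possibly the crossing cluster).
\end{itemize}
For $D$, monotonicity in $\xi$ gives $\phi^\xi_{R_v,\beta}[D]\le\phi^{0}_{R_v,\beta}[D]$. For $A$ we cannot compare directly. Instead we strengthen: on the annulus-like region $R_v\setminus Q_v$, take the increasing event $G$ that the restriction to $R_v$ contains a unique cluster crossing $R_v$, and that this cluster connects to every cluster in $Q_v$ of diameter at least $q/2$. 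This is again not monotone, so we will instead use the standard formulation from \cite{Pis96}: the bad event lies in the union of a decreasing event (no crossing) and an event of the form "two disjoint large clusters in a box, under the free measure on a larger box". The latter is bounded using the finite-energy/conditioning argument of Pisztora. There the boundary condition outside $R_v$ enters only through the domain Markov property, and the estimates are uniform because Pisztora's renormalization bounds are stated with the worst (free) boundary condition on a box of comparable size.

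\emph{Step 2: input for all $\beta>\beta_c$.} For $d\ge3$ we use Bodineau's theorem that the slab critical point equals $\beta_c$ \cite{Bod05}. Combined with Pisztora's coarse graining, this gives $\phi^{0}_{R_v,\beta}[\text{bad}]\le e^{-c_0q}$ for $q\ge q_0$, since Pisztora's hypothesis is exactly slab percolation, which Bodineau verifies for $\beta>\beta_c$. For $d=2$ we use \cite{CM04}, which proves the analogous block estimate for all $p>p_c$ via duality and exponential decay of the subcritical dual model.

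\emph{Step 3: uniformity.} The constants $c_0$ and $q_0$ in these theorems depend only on $(d,\beta)$, and the events involve only edges of $Q_v$. Replacing $\xi$ by free conditions on $R_v$ is the worst case for the pieces treated by monotonicity. The uniqueness piece is handled by the conditional argument, which is insensitive to $\xi$.

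The main obstacle is this boundary-uniformity of the uniqueness part. I expect the paper simply cites the coarse-graining statements in their "every boundary condition" form, since \cite{Pis96,Bod05} indeed state them uniformly in $\xi$ on the larger box.
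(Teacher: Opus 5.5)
This is essentially the paper's argument. The paper gives no independent proof: it cites Pisztora's coarse-graining theorem together with Bodineau's equality $\hat\beta_c=\beta_c$ for $d\ge3$, and Couronn\'e--Messikh for $d=2$, in the form that is uniform over boundary conditions on the larger box.

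Your Step~1 decomposition is superfluous, and its justification for the non-monotone uniqueness piece is off. Free boundary conditions are not a ``worst case'' for that event, and nothing in your Steps 1--3 by itself gives a $\xi$-uniform bound for it. The uniformity in $\xi$ comes only from the supremum over boundary conditions built into the cited theorems, as you note at the end.
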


For $d\ge3$, Proposition \ref{prop:zero-field-block} is a consequence of Pisztora's coarse-graining estimate 
\cite[Theorem 3.1]{Pis96} and Bodineau's equality of the slab and bulk critical points \cite{Bod05}.  For $d=2$, it follows from Couronn\'e and Messikh \cite[Theorem 9]{CM04}.

\section{The chessboard estimate}\label{sec:chessboard}

In this section, we bound the probability that all blocks indexed by an arbitrary set $S$ of coarse vertices are bad under a small staggered field. A direct comparison with the zero-field measure yields a factor exponential in the volume of the torus and therefore gives no useful uniform estimate.  By applying the chessboard estimate to the joint spin-bond measure, we obtain instead a cost for each prescribed bad block.  Together with Proposition~\ref{prop:zero-field-block}, this gives exponential decay in |S|.

\subsection{Reflection positivity}

For even $L\ge4$, $1\le i\le d$ and integers $0\le b<L/2$, define
the coordinate reflection
\begin{equation}\label{eq:coordinate-reflection}
 \vartheta_{i,b}u:=u+2(b-u_i)e_i\pmod L.
\end{equation}
It also acts on spins and bonds by
\[
 (\vartheta_{i,b}\sigma)_u:=\sigma_{\vartheta_{i,b}u},
 \qquad
 (\vartheta_{i,b}\omega)_{uv}
 :=\omega_{\vartheta_{i,b}u,\vartheta_{i,b}v}.
\]
Figure~\ref{fig:torus-reflection} illustrates this reflection in
dimension two.

\begin{figure}[tbp]
\centering
\includegraphics[width=0.75\textwidth]{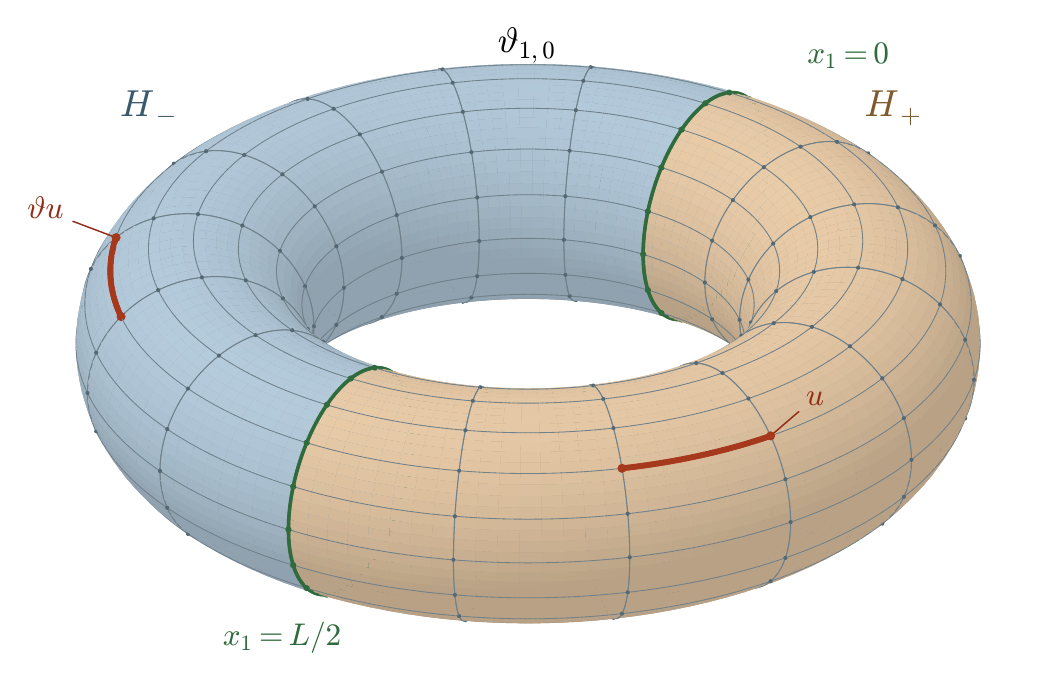}
\caption{The two-dimensional torus embedded in three-dimensional space.
The reflection $\vartheta_{1,0}$ exchanges the two shaded half-tori
and fixes the two circles $x_1=0$ and $x_1=L/2$.
Green vertices and edges belong to the reflection plane and generate
$\mathcal F_0$.
The marked vertex and red edge are shown together with their
reflected images. }
\label{fig:torus-reflection}
\end{figure}

To apply the standard formulation of reflection positivity, encode
the joint configuration as site variables
$X\in\{-1,0,1\}^{\mathbb T_{2L}}$ by
\begin{equation}\label{eq:site-encoding}
 X_{2u}:=\sigma_u,\qquad
 X_{2u+e_j}:=\omega_{u,u+e_j},
\end{equation}
and set $X_z=0$ at all remaining sites.  Denote the induced measure by $\widetilde{\Pp}_{L,\beta,h}$.  The original spins and bonds under $\Pp_{L,\beta,h}$ can be recovered from $X$.  Moreover, the counterpart of $\vartheta_{i,b}(\sigma,\omega)$ is $\widetilde\vartheta_{i,b}X$, where
\[
 \widetilde\vartheta_{i,b}z
 :=z+2(2b-z_i)e_i\pmod{2L}.
\]

The reflection plane (say $\widetilde{P}$) corresponding to $\widetilde\vartheta_{i,b}$ divides $\mathbb{T}_{2L}$ into two halves, $\mathbb{T}_{2L}^+$ and $\mathbb{T}_{2L}^-$, such that $\mathbb{T}_{2L}^+ \cap \mathbb{T}_{2L}^-$ is $\widetilde{P} \cap \mathbb{T}_{2L}$. Let $\mathcal F_+$ be the $\sigma$-algebra generated by $X$ restricted to $\mathbb{T}_{2L}^+$. By
\cite[Definition~5.1]{Bis09}, reflection positivity means that
\begin{equation}\label{eq:reflection-positivity}
 \begin{aligned}
 \langle F\,\widetilde\vartheta_{i,b}G\rangle_{\widetilde{\Pp}_{L,\beta,h}}
 &=
 \langle G\,\widetilde\vartheta_{i,b}F\rangle_{\widetilde{\Pp}_{L,\beta,h}},
 \\
 \langle F\,\widetilde\vartheta_{i,b}F\rangle_{\widetilde{\Pp}_{L,\beta,h}}
 &\ge0
 \end{aligned}
\end{equation}
for all bounded real-valued $\mathcal F_+$-measurable functions
$F,G$, where
$\widetilde\vartheta_{i,b}F:=F\circ\widetilde\vartheta_{i,b}$.

\begin{lemma}\label{lem:reflection-positivity}
For every $i,b$ as above, $\widetilde{\Pp}_{L,\beta,h}$ is reflection
positive with respect to $\widetilde\vartheta_{i,b}$.
\end{lemma}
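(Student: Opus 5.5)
We will prove reflection positivity by the standard route: write the weight of the joint measure in a form to which the criterion of \cite[Section~5]{Bis09} applies. Concretely, we aim for a factorization
\[
 \widetilde{\Pp}_{L,\beta,h}[X]\propto e^{A(X)}\, e^{\widetilde\vartheta_{i,b}A(X)}\, \prod_{\alpha} W_\alpha(X)\, \widetilde\vartheta_{i,b}W_\alpha(X),
\]
with $A$ and all $W_\alpha$ being $\mathcal F_+$-measurable and nonnegative. The plan has three steps: (i) handle the reflection geometry on the site lattice, (ii) handle the field term and the weights of edges lying in one half, (iii) handle the edges crossing the reflection plane.

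First we check the geometry. In $\mathbb T_{2L}$ the plane of $\widetilde\vartheta_{i,b}$ is $\{z_i\in\{2b,2b+L\}\}$. Since $L$ is even, both values are even, so the plane passes through sites $X_{2u}$ with $u_i\in\{b,b+L/2\}$. It also passes through the bond sites $X_{2u+e_j}$ with $j\neq i$ lying over those $u$. No bond site of an edge in direction $e_i$ lies on the plane, because the $i$-th coordinate of such a site is odd. Hence every edge of $\mathbb T_L$ falls into one of two classes:
\begin{itemize}
\item edges lying entirely in one closed half, whose encoding sites all lie in $\mathbb T^+_{2L}$ or all lie in $\mathbb T^-_{2L}$;
\item edges $u,u+e_i$ straddling the plane, with $u$ on the plane. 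Here $\widetilde\vartheta_{i,b}$ maps the edge $\{u,u+e_i\}$ to $\{u,u-e_i\}$, which lies on the other side.
\end{itemize}
We must also check that the reflection maps the "zero sites" to zero sites, and spin sites and bond sites to sites of the same type. It does, because it preserves parity of coordinates.

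Next we handle the weights not involving the plane. The essential point is that the staggered field is invariant: $\vartheta_{i,b}$ changes $\abs u$ by $2(b-u_i)$, an even number (modulo $L$, also even). So $(-1)^{\abs{\vartheta u}}=(-1)^{\abs u}$, and no sign flip of $h$ occurs. This is exactly why the staggered field is compatible with reflections through sites, and why $L$ must be even. We set
\[
 A=\tfrac{h}{2}\sum_{\text{plane}}(-1)^{\abs u}\sigma_u+h\sum_{\text{strict }+}(-1)^{\abs u}\sigma_u,
\]
which splits the plane contribution symmetrically between the two halves. Each edge weight $(1-p)\delta_{\omega_e,0}+p\,\delta_{\omega_e,1}\delta_{\sigma_u,\sigma_v}$ with $e$ in the $+$ half is paired with its reflected image. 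Edges lying within the plane are fixed by the reflection, and we take the square root of their (nonnegative) weight on each side. Configurations with $X$ nonzero at invalid sites get weight zero. That constraint is a product of local $\mathcal F_\pm$ indicators and is reflection symmetric.

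The straddling edges are the step we expect to be the main obstacle, since their weights are not a priori products. Here we use the site encoding. For a straddling edge $e=\{u,u+e_i\}$ with $u$ on the plane, the weight depends on $\sigma_u$ (a plane variable), $\sigma_{u+e_i}$, and $\omega_e$. All three variables lie in the $+$ half: the bond site $2u+e_i$ has $i$-coordinate $2b+1$, strictly on the $+$ side. So this weight is $\mathcal F_+$-measurable. Its reflected partner is exactly the weight of $\{u,u-e_i\}$, which is $\mathcal F_-$-measurable. Thus the straddling edges also pair up as $W\cdot\widetilde\vartheta W$.

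With the full factorization in hand, we finish as follows. Condition on the plane variables $\mathcal F_0$. Given these, the two halves are independent, and the conditional law of the $-$ half is the reflection of that of the $+$ half. Therefore
\[
\langle F\,\widetilde\vartheta F\rangle=\sum_{\text{plane}}c\,\Bigl(\sum_{+}F\,e^{A}\prod_\alpha W_\alpha\Bigr)^2\ge0,
\]
with $c\ge0$ collecting the plane-only weights. The symmetry identity $\langle F\,\widetilde\vartheta G\rangle=\langle G\,\widetilde\vartheta F\rangle$ follows because the measure is invariant under $\widetilde\vartheta_{i,b}$, which in turn follows from the same factorization.
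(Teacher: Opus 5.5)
Your proposal is correct and is essentially the paper's argument. The paper also conditions on the plane $\sigma$-algebra $\mathcal F_0$. It uses the product form of \eqref{eq:ES-measure}, together with the reflection planes passing through lattice sites, to get conditional independence of the two closed halves, and it uses parity preservation of the staggered field for reflection invariance; it then concludes $\langle F\,\widetilde\vartheta_{i,b}G\rangle=\bigl\langle\langle F\mid\mathcal F_0\rangle\langle G\mid\mathcal F_0\rangle\bigr\rangle$, which is your sum-of-squares identity, whereas you write the factorization out explicitly. One small correction: the edges $\{u,u\pm e_i\}$ with $u$ on the plane do not actually straddle it, since all their encoding sites lie in one closed half (as you yourself observe), so that step is not an obstacle.
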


\begin{proof}
Fix $i,b$, and abbreviate
$\widetilde\vartheta=\widetilde\vartheta_{i,b}$ and
$\widetilde{\Pp}=\widetilde{\Pp}_{L,\beta,h}$.  Let $\mathcal{F}_0$ be the $\sigma$-algebra generated by $X_z$'s with $z$ in the reflection plane: 
\[
 \mathcal F_0
 :=\sigma\bigl(X_z:z_i=2b\text{ or }z_i=2b+L\bigr).
\]
Apart from deterministic variables, this records precisely the spins on the original reflection plane and the bonds lying entirely within it.

Since the reflecting planes pass through lattice sites, \eqref{eq:ES-measure} implies that the two half-tori are conditionally
independent given $\mathcal F_0$.  Since $\vartheta_{i,b}$ preserves vertex parity, 
\[\Pp_{L,\beta,h}\bigl[\sigma,\omega\bigr]=\Pp_{L,\beta,h}\bigl[\vartheta_{i,b}\sigma,\vartheta_{i,b}\omega\bigr].\]
The reflection fixes all $X_z$'s generating
$\mathcal F_0$, so it exchanges the two conditional laws.  Hence
\[
 \left\langle F\,\widetilde\vartheta G\mid\mathcal F_0\right\rangle_{\widetilde{\Pp}}
 =
 \left\langle F\mid\mathcal F_0\right\rangle_{\widetilde{\Pp}}
 \left\langle G\mid\mathcal F_0\right\rangle_{\widetilde{\Pp}}.
\]
Averaging over the joint law of the variables generating
$\mathcal F_0$ gives
\begin{equation}\label{eq:rp-conditional-product}
 \langle F\,\widetilde\vartheta G\rangle_{\widetilde{\Pp}}
 =
 \left\langle
 \left\langle F\mid\mathcal F_0\right\rangle_{\widetilde{\Pp}}
 \left\langle G\mid\mathcal F_0\right\rangle_{\widetilde{\Pp}}
 \right\rangle_{\widetilde{\Pp}}.
\end{equation}
This expression is symmetric in $F,G$, and taking $G=F$ yields
\[
 \langle F\,\widetilde\vartheta F\rangle_{\widetilde{\Pp}}
 =
 \left\langle
 \left(\left\langle F\mid\mathcal F_0\right\rangle_{\widetilde{\Pp}}\right)^2
 \right\rangle_{\widetilde{\Pp}}
 \ge0.
\]
Thus both conditions for reflection positivity hold.
\end{proof}

We may therefore apply the chessboard estimate \cite[Theorem~5.8]{Bis09} to $\widetilde{\Pp}_{L,\beta,h}$ and thus $\Pp_{L,\beta,h}$.

\subsection{Bad blocks in a staggered field}

Fix $d\ge2$ and $\beta>\beta_c(d)$.  Let $q\ge q_0$ be odd, where $q_0$ is as in Proposition~\ref{prop:zero-field-block}, and take $L=q2^{k+1}$ for some integer $k\ge2$.  We index the coarse vertices by $\mathbb T_{L/q}$ and regard the blocks $Q_v,R_v$ as subsets of $\mathbb T_L$.  Divide the coarse vertices into the $2^d$ classes
\begin{equation}\label{eq:classes}
 \mathcal A_r:=\{v\in\mathbb T_{L/q}:v\equiv r\pmod 2\},
 \qquad r\in\{0,1\}^d.
\end{equation}
For each $r$, the blocks $\{Q_v:v\in\mathcal A_r\}$ have side
length $2q$ and tile the torus with shared faces.  Let
\begin{equation}\label{eq:number-blocks}
 M:=\abs{\mathcal A_r}=\left(\frac{L}{2q}\right)^d.
\end{equation}

We first estimate the probability that every block in one such
tiling is bad at zero field.

\begin{lemma}\label{lem:zero-field-all-bad}
Recall $c_0$ and $q_0$ from Proposition~\ref{prop:zero-field-block}.  Let $c_1:=c_0/4^d$. Then, for every $r\in\{0,1\}^d$,
\begin{equation}\label{eq:zero-field-all-bad}
 \Pp_{L,\beta,0}\left[Q_v\text{ is bad for every }v\in\mathcal A_r\right]
 \le e^{-c_1qM}, \qquad \forall q\geq q_0, L=q 2^{k+1} \text{ with }k\geq 2. 
\end{equation}
\end{lemma}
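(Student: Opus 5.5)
Since $h=0$, the bond marginal of $\Pp_{L,\beta,0}$ is the FK measure $\phi_{\mathbb T_L,\beta}$ on the torus, and the event $\{Q_v \text{ bad for every } v\in\mathcal A_r\}$ depends only on bonds. The plan is to extract from $\mathcal A_r$ a sparser subfamily $\mathcal B\subset\mathcal A_r$ whose enlarged blocks $R_v$ are pairwise disjoint. We then condition successively using the domain Markov property and apply Proposition~\ref{prop:zero-field-block} uniformly over boundary conditions. Concretely, let $\mathcal B:=\{v\in\mathbb T_{L/q}: v\equiv r \pmod 4\}\subset\mathcal A_r$. This makes sense because $L/q=2^{k+1}$ is divisible by $4$ when $k\ge1$. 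For distinct $v,w\in\mathcal B$ some coordinate differs by at least $4$ (mod $L/q$), so the $R_v=qv+[-2q,2q]^d$ have centers at $\ell^\infty$-distance $\ge 4q$ on $\mathbb T_L$ and radius $2q$. They can share at most boundary faces, and their edge sets $E(R_v)$ are disjoint.

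We have $|\mathcal B|=(L/(4q))^d=M/2^d$. Since $\{Q_v \text{ bad } \forall v\in\mathcal A_r\}\subset\{Q_v\text{ bad }\forall v\in\mathcal B\}$, it suffices to bound the latter. Enumerate $\mathcal B=\{v_1,\dots,v_N\}$ and write
\[
\phi_{\mathbb T_L,\beta}\Bigl[\bigcap_{j\le N}\{Q_{v_j}\text{ bad}\}\Bigr]=\prod_{j=1}^N \phi_{\mathbb T_L,\beta}\Bigl[Q_{v_j}\text{ bad}\,\Big|\,\bigcap_{i<j}\{Q_{v_i}\text{ bad}\}\Bigr].
\]
Badness of $Q_{v_j}$ is measurable with respect to $E(Q_{v_j})\subset E(R_{v_j})$. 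The conditioning events, for $i\neq j$, are measurable with respect to edges outside $E(R_{v_j})$. By the domain Markov property of FK percolation, conditioning further on the full configuration outside $E(R_{v_j})$ yields $\phi^\xi_{R_{v_j},\beta}$ for some (random) boundary condition $\xi$ induced by connections outside. Proposition~\ref{prop:zero-field-block} holds for every $\xi$ since $q\ge q_0$, so each factor is at most $e^{-c_0q}$. Therefore the probability is at most $e^{-c_0 q M/2^d}\le e^{-c_0qM/4^d}=e^{-c_1qM}$, the stated bound (with room to spare).

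The main point to check carefully is the domain Markov step on the torus: the boundary condition induced on $R_{v_j}$ by the outside configuration is a partition of $\partial R_{v_j}$, which is exactly the generality Proposition~\ref{prop:zero-field-block} allows. One must also confirm that $R_v$ embeds in $\mathbb T_L$ without self-overlap, which holds since $4q\le L=q2^{k+1}$ for $k\ge2$, and that the blocks $R_v$ in the torus are isometric copies of the planar ones, so the proposition applies verbatim. The loss from $2^d$ to $4^d$ in the constant is harmless; the stated $c_1=c_0/4^d$ presumably reflects a slightly more conservative sparsification, e.g. modulo $4$ in $\mathbb T_{L/q}$ counted relative to all $(L/q)^d$ vertices.
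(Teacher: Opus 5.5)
Your argument is essentially the paper's: sparsify $\mathcal A_r$, bound each block conditionally on the bonds outside $E(R_v)$ using the domain Markov property and Proposition~\ref{prop:zero-field-block}, and iterate. The paper sparsifies modulo $8$, which makes the $R_v$ genuinely disjoint and gives $|\mathcal I_r|=M/4^d$; that is where $c_1=c_0/4^d$ comes from. Your modulo-$4$ family yields the sharper constant $c_0/2^d$.

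One correction. For your family the edge sets $E(R_v)$ are \emph{not} disjoint: for $r=0$, $R_0$ and $R_{4e_1}$ share the face $\{2q\}\times[-2q,2q]^{d-1}$ and every edge lying in it. The argument never needs that disjointness, though. It only needs $\{Q_w\text{ bad}\}$ to be measurable with respect to the bonds outside $E(R_v)$ for $w\ne v$. This holds because $Q_w\cap R_v=\emptyset$: some coordinate of $qv$ and $qw$ differs cyclically by at least $4q$, while the half-widths are $q$ and $2q$.
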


\begin{proof}
Set
\[
 \mathcal I_r:=\{v\in\mathbb T_{L/q}:v\equiv r\pmod 8\}.
\]
Since $8$ divides $L/q$, we have $\mathcal I_r\subset\mathcal A_r$
and $\abs{\mathcal I_r}=M/4^d$.  The enlarged blocks
$\{R_v:v\in\mathcal I_r\}$ are pairwise disjoint.

At zero field, the bond marginal of $\Pp_{L,\beta,0}$ is FK percolation. Conditional on the bonds outside $E(R_v)$, its law inside $R_v$ is $\phi^{\xi}_{R_v,\beta}$ for the boundary condition $\xi$ induced by the exterior connections.  Proposition~\ref{prop:zero-field-block}
therefore gives
\[
 \Pp_{L,\beta,0}\left[
 Q_v\text{ is bad}\,\middle|\,
 \omega_{E(\mathbb T_L)\setminus E(R_v)}
 \right]\le e^{-c_0q}
 \qquad\text{almost surely}.
\]
For distinct $v,w\in\mathcal I_r$, the event that $Q_w$ is bad is measurable
with respect to the bonds outside $E(R_v)$.  Iterating the
conditional bound over $\mathcal I_r$ yields
\begin{align*}
 \Pp_{L,\beta,0}\left[Q_v\text{ is bad for every }v\in\mathcal A_r\right]
 &\le
 \Pp_{L,\beta,0}\left[Q_v\text{ is bad for every }v\in\mathcal I_r\right]
 \\
 &\le e^{-c_0q\abs{\mathcal I_r}}
 =e^{-c_1qM}.
\end{align*}
This completes the proof of the lemma.
\end{proof}

We next compare the measures with and without staggered field.  By \eqref{eq:ES-measure},
\[
 \frac{\dd\Pp_{L,\beta,h}}{\dd\Pp_{L,\beta,0}}(\sigma,\omega)
 =
 \frac{\exp[h\sum_{u\in\mathbb T_L}(-1)^{\abs u}\sigma_u]}
 {\langle \exp[h\sum_{u\in\mathbb T_L}(-1)^{\abs u}\sigma_u]
 \rangle_{\Pp_{L,\beta,0}}}
 \le e^{2\abs hL^d},
\]
where the bound follows from $\abs{\sum_{u\in\mathbb T_L}(-1)^{\abs u}\sigma_u}\le L^d$. Consequently, for each event $A$,
\begin{equation}\label{eq:global-tilt}
 \Pp_{L,\beta,h}\bigl[A\bigr]
 \le e^{2\abs hL^d}\Pp_{L,\beta,0}\bigl[A\bigr].
\end{equation}

For each $v\in\mathcal A_r$, the event $\{Q_v \text{ is bad}\}$ depends only on the bonds in $E(Q_v)$.  Reflection through a shared face maps the bad-block event to the corresponding event in the reflected block, and Lemma~\ref{lem:reflection-positivity} gives reflection positivity for all such reflections.  Hence the chessboard estimate
\cite[Theorem~5.8]{Bis09} gives, for every
$S\subset\mathcal A_r$,
\begin{equation*}
 \Pp_{L,\beta,h}\left[Q_v\text{ is bad for every }v\in S\right]
 \le
 \Pp_{L,\beta,h}\left[
 Q_v\text{ is bad for every }v\in\mathcal A_r
 \right]^{\abs S\left(\frac{2q}{L}\right)^d}.
\end{equation*}
Combining this with \eqref{eq:global-tilt} and
Lemma~\ref{lem:zero-field-all-bad} yields
\begin{equation}\label{eq:chessboard-main}
 \Pp_{L,\beta,h}\left[Q_v\text{ is bad for every }v\in S\right]
 \le \exp[(2\abs h(2q)^d-c_1q)\abs S].
\end{equation}

This gives a uniform estimate for an arbitrary collection of blocks.

\begin{proposition}\label{prop:uniform-bad-block}
Recall $q_0$ and $c_1$ from Lemma \ref{lem:zero-field-all-bad}. For every odd $q\ge q_0$, set
\begin{equation}\label{eq:h0-choice}
 h_q:=\frac{c_1q}{4(2q)^d},\qquad
 r_q:=\exp\left[-\frac{c_1q}{2^{d+1}}\right].
\end{equation}
Then, for every $\abs h\le h_q$ and every
$S\subset\mathbb T_{L/q}$,
\begin{equation}\label{eq:uniform-bad-block}
 \Pp_{L,\beta,h}\left[Q_v\text{ is bad for every }v\in S\right]
 \le r_q^{\abs S},
\end{equation}
uniformly over $L=q2^{k+1}$ with integer $k\ge2$.  In particular,
$r_q\downarrow0$ as $q\to\infty$ through odd integers.
\end{proposition}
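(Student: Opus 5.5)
The plan is to reduce an arbitrary $S\subset\mathbb T_{L/q}$ to the single-class estimate \eqref{eq:chessboard-main}, which is already available for subsets of one parity class $\mathcal A_r$. First we check that \eqref{eq:chessboard-main} gives a per-block bound once $\abs h\le h_q$. The choice of $h_q$ gives $2\abs h(2q)^d\le 2h_q(2q)^d=c_1q/2$. Hence, for every $S'\subset\mathcal A_r$,
\[
 \Pp_{L,\beta,h}\left[Q_v\text{ is bad for every }v\in S'\right]\le \exp\left[-\tfrac{c_1q}{2}\abs{S'}\right],
\]
uniformly in $L=q2^{k+1}$, $k\ge2$. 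The input here is exactly \eqref{eq:global-tilt}, Lemma~\ref{lem:zero-field-all-bad}, and the chessboard estimate.

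Next, decompose a general $S$ as the disjoint union $S=\bigcup_{r\in\{0,1\}^d}(S\cap\mathcal A_r)$. There are $2^d$ classes, so by pigeonhole some $r^*$ satisfies $\abs{S\cap\mathcal A_{r^*}}\ge 2^{-d}\abs S$. The event that all blocks indexed by $S$ are bad is contained in the event that all blocks indexed by $S\cap\mathcal A_{r^*}$ are bad. Monotonicity of probability under inclusion then gives
\[
 \Pp_{L,\beta,h}\left[Q_v\text{ is bad for every }v\in S\right]
 \le \exp\left[-\tfrac{c_1q}{2}\cdot 2^{-d}\abs S\right]
 =\exp\left[-\tfrac{c_1q}{2^{d+1}}\right]^{\abs S}=r_q^{\abs S}.
\]
This is \eqref{eq:uniform-bad-block}. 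The case $S=\emptyset$ is trivial. Finally, $r_q\downarrow0$ as $q\to\infty$ because $c_1>0$ does not depend on $q$.

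The only genuinely delicate point is the validity of \eqref{eq:chessboard-main} itself, which the proposition simply invokes. That requires two things. The bad-block events for different $v\in\mathcal A_r$ must be images of one another under compositions of reflections $\vartheta_{i,b}$ through shared faces. The torus size $L=q2^{k+1}$ must also make the number of blocks per direction, $L/(2q)=2^k$, a power of two, as needed for the chessboard theorem \cite[Theorem~5.8]{Bis09}. One should check that reflecting the badness event of $Q_v$ gives the badness event of the reflected block. This holds because Definition~\ref{def:good-block} is invariant under lattice symmetries of the cube, and it depends only on $E(Q_v)$, which is $\mathcal F_+$-measurable for the appropriate half. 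Granting that, the remaining argument is just the pigeonhole reduction above, and no interaction between classes has to be controlled, since we discard all but one class.
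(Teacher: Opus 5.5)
Your proposal is correct and is essentially the paper's own proof. Like the paper, you pigeonhole to a class $\mathcal A_r$ with $\abs{S\cap\mathcal A_r}\ge\abs S/2^d$, use $2\abs h(2q)^d\le c_1q/2$ in \eqref{eq:chessboard-main} to get $\exp[-\tfrac{c_1q}{2}\abs{S\cap\mathcal A_r}]\le r_q^{\abs S}$, and your remarks on the chessboard step (power-of-two number of blocks per direction, reflection invariance of the bad-block event) match how the paper sets up $L=q2^{k+1}$.
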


\begin{proof}
Choose $r$ such that $\abs{S\cap\mathcal A_r}\ge\abs S/2^d$.  Since $2\abs h(2q)^d\le c_1q/2$, \eqref{eq:chessboard-main} gives
\[
 \begin{aligned}
 \Pp_{L,\beta,h}\left[Q_v\text{ is bad for every }v\in S\right]
 &\le
 \Pp_{L,\beta,h}\left[Q_v\text{ is bad for every }v\in S\cap\mathcal A_r\right]
 \\
 &\le
 \exp\left[-\frac{c_1q}{2}\abs{S\cap\mathcal A_r}\right]
 \le r_q^{\abs S}.
 \end{aligned}
\]
This completes the proof of the proposition.
\end{proof}

\section{Construction of conditioned Gibbs states}\label{sec:infinite-volume}

Let $q$ be a sufficiently large odd block scale and $\abs h\le h_q$. Taking $L\to\infty$ through the tori of Section~\ref{sec:chessboard}, let $\Pp_{\beta,h}$ be a subsequential weak limit of $\Pp_{L,\beta,h}$; see  \cite[Theorem~6.24]{FV18} for the justification of such a limit. We first use Proposition~\ref{prop:uniform-bad-block} to obtain an infinite FK mesh cluster (i.e., an infinite FK open cluster arising from an infinite good-block component). Conditioning on its sign gives Gibbs states whose crossing probabilities will be used in Section~\ref{sec:continuity}. The construction applies to every sufficiently large odd $q$.

\subsection{The infinite component of good blocks}

Two vertices of $\Z^d$ are called $*$-neighbors if their
$\ell^\infty$-distance is one.  We prove the following general
dependent-percolation lemma by a Peierls argument.

\begin{lemma}\label{lem:good-geometry}
Let $d\ge2$ and $\mu$ be a probability measure on $\{0,1\}^{\Z^d}$.  Call a vertex \emph{open} when $\eta_v=1$ and \emph{closed} when $\eta_v=0$.  Suppose that, for some $r>0$,
\begin{equation}\label{eq:abstract-bad-bound}
 \mu\bigl[\eta_v=0\text{ for every }v\in A\bigr]\le r^{\abs A}, \qquad \forall A\Subset\Z^d.
\end{equation}
Set $\kappa_d=(3^d-1)^2$.
If $r<\kappa_d^{-1}$, then the open vertices have,
$\mu$-almost surely, a unique infinite nearest-neighbor component
$\mathcal C_\infty$.  Moreover, for every $v\in\Z^d$,
\begin{equation}\label{eq:good-density}
 \mu\bigl[v\notin\mathcal C_\infty\bigr]
 \le r+\frac{\kappa_dr}{(1-\kappa_dr)^2}
 =:\delta_d(r).
\end{equation}
In particular, $\delta_d(r)\to0$ as $r\downarrow0$.
\end{lemma}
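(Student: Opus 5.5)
The argument is a Peierls count on $*$-connected closed sets, using the hypothesis \eqref{eq:abstract-bad-bound} in place of independence. Two standard combinatorial facts are needed.
\begin{itemize}
\item[(a)] The number of $*$-connected sets $A\Subset\Z^d$ of size $n$ containing a given vertex is at most $\kappa_d^{\,n}$. Indeed, each vertex has $3^d-1$ $*$-neighbors, and a $*$-connected set of size $n$ can be traced by a walk of length at most $2n$ on the $*$-graph, giving the bound $(3^d-1)^{2n}=\kappa_d^n$.
\item[(b)] If a finite set of vertices is enclosed in a finite nearest-neighbor open component, then its outer vertex boundary is closed. That boundary contains a $*$-connected set of closed vertices surrounding it; this is the standard duality between nearest-neighbor and $*$-connectivity in $\Z^d$, $d\ge2$, e.g.\ via Tim\'ar's result on connectivity of boundaries.
\end{itemize}

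\textbf{Step 1: the bound \eqref{eq:good-density}.} Fix $v$. If $v\notin\mathcal C_\infty$, then either $v$ is closed, which has probability at most $r$, or $v$ is open and lies in a finite open component (or in an infinite component other than $\mathcal C_\infty$, which Step 2 rules out). In the finite case there is a closed $*$-connected set $A$ surrounding $v$. Any such circuit of size $n$ must intersect the ray $\{v+je_1:j\ge1\}$ at some $v+je_1$ with $j\le n$. Hence the number of candidate sets of size $n$ is at most $n\kappa_d^n$, and by \eqref{eq:abstract-bad-bound}
\[
\mu\bigl[v\notin\mathcal C_\infty\bigr]\le r+\sum_{n\ge1} n\,\kappa_d^n r^n
= r+\frac{\kappa_d r}{(1-\kappa_d r)^2},
\]
which is $\delta_d(r)$. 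The same counting applied to contours surrounding the box $B_m$, whose sizes are at least of order $m$, shows that
\[
\mu\bigl[B_m \text{ is enclosed by a closed $*$-circuit}\bigr]\to0 \quad\text{as } m\to\infty.
\]
In particular, infinite open components exist almost surely, since $\mu[0 \text{ is in a finite open component}]<1$ and the same holds at every vertex. This avoids needing ergodicity for the existence of an infinite component.

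\textbf{Step 2: uniqueness.} This is the main obstacle, because $\mu$ is not assumed translation invariant, so Burton--Keane is not available. Instead I would use a planar-type argument: show that almost surely all but finitely many of the boxes $B_m$ admit no "separating" closed $*$-set. Concretely, suppose two distinct infinite open components both meet $B_m$. Then in $\Z^d$ there is a closed $*$-connected set separating them. Within $B_{2m}\setminus B_m$ this set must cross from $\partial B_m$ to $\partial B_{2m}$, so it contains a closed $*$-path of length at least $m$ starting on $\partial B_m$. (For $d\ge3$ the separating set is a surface, which only helps, but the path bound suffices.) The number of such paths is at most
\[
|\partial B_m|\,\kappa_d^{\,n}\quad\text{for each length } n\ge m,
\]
so their total probability is at most $Cm^{d-1}(\kappa_d r)^m/(1-\kappa_d r)$, which is summable in $m$. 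By Borel--Cantelli, almost surely for all large $m$ no two infinite components meet $B_m$. Since every infinite component eventually meets $B_m$, there is exactly one.

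The delicate point is justifying that two distinct infinite components force a closed $*$-connected separating set of diameter at least $m$. I would argue this via the $*$-connected outer boundary of the finite portion of one component inside $B_{2m}$, appealing again to the boundary-connectivity lemma from (b).
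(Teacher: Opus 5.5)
Your proposal follows essentially the same route as the paper: a Peierls count of closed $*$-connected sets using \eqref{eq:abstract-bad-bound} in place of independence, Tim\'ar's boundary-connectivity lemma, and the ray-intersection count giving $r+\sum_{n\ge1}n(\kappa_d r)^n=\delta_d(r)$. The one real gap is the existence step, and your uniqueness step can be simplified.

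\textbf{Existence.} Your stated reason does not give probability one. Knowing $\mu[0\text{ lies in a finite open component}]<1$, even at every vertex, only shows that an infinite component exists with positive probability. Without ergodicity, that is all it shows. The missing link is the following: on the event that all open components are finite, every box $B_m$ is enclosed by a closed $*$-connected set. To see this, let $K$ be $B_m$ together with the open components meeting or adjacent to $B_m$. There are finitely many such components, each finite, so $K$ is finite and connected. Its exterior vertex boundary is closed, since an open neighbour of $K$ would already belong to one of the absorbed components. It is $*$-connected by Tim\'ar's lemma. The paper gets the same set by declaring every vertex of $B_N(v)$ open. Combined with your estimate $\mu[B_m\text{ is enclosed}]\to0$, this gives almost-sure existence.

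\textbf{Uniqueness.} Your annulus-crossing argument with Borel--Cantelli works, but the paper avoids localization, and the ``delicate point'' you flag is easier than you expect. Apply Tim\'ar's lemma to the infinite component $\mathcal C$ itself. Let $U$ be the component of $\Z^d\setminus\mathcal C$ containing $\mathcal C'$. The vertices of $U$ adjacent to $\mathcal C$ form a closed $*$-connected set separating $\mathcal C$ from $\mathcal C'$. This set must be infinite, because removing finitely many vertices from $\Z^d$, $d\ge2$, leaves a single infinite component.

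The paper then gets a contradiction with the almost-sure absence of infinite closed $*$-connected sets. That absence follows directly from the bound $(\kappa_d r)^n$ on closed $*$-connected sets of size at least $n$ containing a fixed vertex.

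If you prefer your localized version, you can use this same set. It meets $B_m$: a path inside $B_m$ from $\mathcal C$ to $\mathcal C'$ enters it right after its last visit to $\mathcal C$. Being infinite and $*$-connected, it then crosses $B_{2m}\setminus B_m$. No ``finite portion of a component inside $B_{2m}$'' is needed.

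The paper's global formulation buys a cleaner statement and reuses the same animal bound. Yours yields a quantitative, box-local version of the same fact.
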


\begin{proof}
For a finite connected set $A$, let $\partial_{\mathrm{ex}}A$ be the
vertices adjacent to $A$ in the infinite component of $\Z^d\setminus A$.
This boundary set is $*$-connected by \cite[Lemma~2]{Tim13}.

Put $\rho=\kappa_dr<1$.  Every $*$-connected set containing $v$ with at
least $n$ vertices contains a tree with $n$ vertices containing $v$.
A traversal of this tree is a $*$-nearest-neighbor walk of length
$2(n-1)$, and there are at most
$(3^d-1)^{2(n-1)}\le\kappa_d^n$ such walks.  Hence
\begin{equation}\label{eq:large-bad-animal}
 \mu[\exists A \text{ s.t. } v\in A, \ A\text{ is $*$-connected and entirely closed}, \abs A\ge n]\le\rho^n.
\end{equation}
Letting $n\to\infty$ in \eqref{eq:large-bad-animal} and then taking a countable union over $v$ imply that almost surely there is no infinite closed $*$-component.
Call a finite set a \emph{separator of $v$} if it separates $v$ from infinity.
A $*$-connected separator of size $n$ meets
$\{v+je_1:1\le j\le n\}$.  The same count therefore gives
\begin{equation}\label{eq:bad-separator-count}
 \mu[\exists\text{ a closed $*$-connected separator }A\text{ of }v \text{ s.t. }
             \abs A=n]\le n\rho^n.
\end{equation}

For existence of $\mathcal{C}_{\infty}$, suppose all open components are finite.  Making each vertex in a finite
box $B_N(v):=v+[-N,N]^d \cap \Z^d$ open cannot create an infinite component, since
only finitely many original components can be joined.  The open component
containing $B_N(v)$ (assuming each vertex in $B_N(v)$ is open) thus has a finite, closed $*$-connected exterior boundary,
which lies outside $B_N(v)$ and is closed also in the original configuration.
These boundaries give separators of $v$ of arbitrarily large size,
an event of probability zero since \eqref{eq:bad-separator-count} implies that $\sum_{n\ge N}n\rho^n\to0$ as $N\rightarrow \infty$.

For uniqueness, suppose $\mathcal{C},\mathcal{C}'$ are distinct infinite open components,
and let $U$ be the component of $\Z^d\setminus \mathcal{C}$ containing $\mathcal{C}'$.
Again by \cite[Lemma~2]{Tim13}, the vertices in $U$ adjacent to $\mathcal{C}$
form a closed $*$-connected set.  This set separates $\mathcal{C}$ from $\mathcal{C}'$
and is therefore infinite; this is because deleting finitely many vertices from
$\Z^d$, $d\ge2$, leaves only one infinite component.  This contradicts the statement right after
\eqref{eq:large-bad-animal}.

Finally, if $v\notin\mathcal C_\infty$, then $v$ is either closed or its finite
open component has a closed exterior boundary separating $v$ from infinity.
Summing \eqref{eq:bad-separator-count} over all sizes gives
\begin{equation}\label{eq:delta-explicit}
 \mu[v\notin\mathcal C_\infty]\le r+\sum_{m\ge1}m\rho^m
 =r+\frac{\rho}{(1-\rho)^2}=\delta_d(r).
\end{equation}
This completes the proof of the lemma.
\end{proof}

We now apply Lemma~\ref{lem:good-geometry} to the site process
$\eta_v=\ind_{\{Q_v\text{ is good}\}}$.  By
Proposition~\ref{prop:uniform-bad-block} and weak convergence,
its law satisfies \eqref{eq:abstract-bad-bound} with $r=r_q$.  Taking $q$ large
enough that $\kappa_dr_q<1$, denote its unique infinite open component by $\cG_\infty$. Then
\begin{equation}\label{eq:delta-choice}
\Pp_{\beta,h}[v \notin \cG_{\infty}] \leq \delta_q:=\delta_d(r_q)\rightarrow0
 \qquad\text{as }q\to\infty.
\end{equation}
By Definition~\ref{def:good-block}, the crossing clusters in neighboring
good blocks intersect.  Their union over $v\in\cG_\infty$ is therefore
an infinite connected FK mesh cluster, whose spins have a common sign. For $s\in \{-1,+1\}$, we call a block \emph{$s$-good} if it is good and each spin on its crossing cluster equals $s$; define
\begin{equation}\label{eq:mesh-sign}
A_s := \{\exists \text{ an infinite nearest-neighbor component of } s\text{-good blocks}\}.
\end{equation}
In particular, $A_s$ occurs if the FK mesh cluster corresponding to $\mathcal{G}_{\infty}$ takes the sign $s$.
%Of course, two blocks $Q_u$ and $Q_v$ with $u,v \in \Z^d$ are nearest neighbor if and only if $u \sim v$.

\subsection{Conditioning on the mesh sign}

\begin{lemma}\label{lem:tail-sign}
For each $s\in \{-1,+1\}$,  $A_s$ is a tail event. Furthermore,
\begin{equation}\label{eq:sign-half}
 \Pp_{\beta,h}[A_+]=\Pp_{\beta,h}[A_-]=1/2.
\end{equation}
\end{lemma}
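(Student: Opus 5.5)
The plan is to prove the three ingredients separately: $A_s$ is a tail event, $A_+$ and $A_-$ partition a full-measure event, and a symmetry of $\Pp_{\beta,h}$ exchanges them.

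\emph{Tail property.} Fix a finite $\Lambda\Subset\Z^d$. Let $F_\Lambda$ be the finite set of coarse vertices $v$ for which $Q_v$ meets $\Lambda$ or $\bar E(\Lambda)$. The status of a block ($s$-good or not) depends only on $(\sigma,\omega)$ inside that block. Hence changing the configuration in $\Lambda$ affects only the blocks $Q_v$ with $v\in F_\Lambda$.

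Define $A_s^\Lambda$ as the event that there is an infinite nearest-neighbor component of $s$-good blocks indexed by $\Z^d\setminus F_\Lambda$. This event is measurable with respect to the configuration outside $\Lambda$. I claim $A_s=A_s^\Lambda$:
\begin{itemize}
\item Clearly $A_s^\Lambda\subset A_s$, since an infinite connected set of $s$-good blocks avoiding $F_\Lambda$ lies inside an infinite component of $s$-good blocks.
\item Conversely, let $\mathcal C$ be an infinite connected set of $s$-good blocks. The set $\mathcal C\setminus F_\Lambda$ has finitely many components meeting the neighbors of $F_\Lambda$, and every other vertex of $\mathcal C$ lies in one of them. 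Since $\mathcal C\setminus F_\Lambda$ is infinite, at least one of these components is infinite (local finiteness).
\end{itemize}
Since $\Lambda$ is arbitrary, $A_s$ lies in the tail $\sigma$-algebra.

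\emph{$\Pp_{\beta,h}[A_+]+\Pp_{\beta,h}[A_-]=1$, with disjoint events.} By \eqref{eq:delta-choice} and Lemma~\ref{lem:good-geometry}, $\cG_\infty$ exists almost surely. The spins on its FK mesh cluster share a sign $s$, so $A_s$ occurs. Hence $A_+\cup A_-$ has full measure.

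Suppose both $A_+$ and $A_-$ occur. An infinite component of $+$-good blocks is contained in an infinite component of good blocks, and so is an infinite component of $-$-good blocks. By the uniqueness in Lemma~\ref{lem:good-geometry}, both are contained in $\cG_\infty$. But neighboring good blocks have intersecting crossing clusters (Definition~\ref{def:good-block}), so all blocks of $\cG_\infty$ carry the same sign. This is a contradiction, so $A_+\cap A_-=\emptyset$ almost surely.

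\emph{Symmetry.} Consider the map
\[
T(\sigma,\omega)_u=\bigl(-\sigma_{u+qe_1},\,\omega_{\cdot+qe_1}\bigr).
\]
On $\mathbb T_L$ with $q\mid L$, $T$ preserves $\Pp_{L,\beta,h}$, for three reasons:
\begin{itemize}
\item the product over edges is invariant under the translation and under the global spin flip;
\item because $q$ is odd, $(-1)^{\abs{u+qe_1}}=-(-1)^{\abs u}$, so the sign flip cancels the parity change in the staggered field;
\item $L$ is even, so parity is well defined on the torus.
\end{itemize}
Since $T$ is a local bijection commuting with weak limits along the chosen subsequence, $\Pp_{\beta,h}$ is also $T$-invariant.

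The map $T$ sends the block $Q_v$ to $Q_{v-e_1}$ with all spins negated. Therefore it maps $s$-good blocks to $(-s)$-good blocks, and $T^{-1}(A_+)=A_-$. Consequently $\Pp_{\beta,h}[A_+]=\Pp_{\beta,h}[A_-]$, and combining with the previous step gives \eqref{eq:sign-half}.

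The step I expect to need the most care is this choice of symmetry. A naive shift by $e_1$ does not respect the block grid. Using the shift by $qe_1$ with $q$ odd avoids this, since it is simultaneously a block translation and a parity flip.
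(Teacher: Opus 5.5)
Your proof is correct and follows the paper's argument: tail measurability because the infinite $s$-good component is insensitive to finitely many block changes, $\Pp_{\beta,h}[A_+]+\Pp_{\beta,h}[A_-]=1$ from uniqueness of $\cG_\infty$ and the common sign of the FK mesh cluster, and equality from the map $\mathcal T$ that combines the shift by $qe_1$ ($q$ odd) with a global spin flip. Your explicit identity $A_s=A_s^\Lambda$ and your almost-sure disjointness argument for $A_+$ and $A_-$ spell out steps that the paper only asserts.
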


\begin{proof}
A finite change of spins and bonds changes the $s$-good status of only finitely many blocks. Removing finitely many vertices from an infinite component of a locally finite graph leaves an infinite component. Adding finitely many vertices cannot join infinitely many previously finite components. Hence $A_s$ is invariant under finite coordinate changes and is a tail event.

Since $q$ is odd, the transformation
\begin{equation}\label{eq:combined-symmetry}
 (\mathcal T\sigma)_u=-\sigma_{u+qe_1},
 \qquad
 (\mathcal T\omega)_{uv}=\omega_{u+qe_1,v+qe_1},
\end{equation}
preserves the torus measure defined in \eqref{eq:ES-measure}, and hence its limit $\Pp_{\beta,h}$.  The infinite component of good blocks is unique and the corresponding FK mesh cluster has a common sign, so $\Pp_{\beta,h}[A_+]+\Pp_{\beta,h}[A_-]=1$. Since $\mathcal{T}$ shifts the coarse lattice by $e_1$ and reverses the sign of the FK mesh cluster, $\Pp_{\beta,h}[A_+]=\Pp_{\beta,h}[A_-]$. This proves \eqref{eq:sign-half}.
\end{proof}

Define
\begin{equation}\label{eq:conditioned-joint-states}
 \widehat\Pp^{\,s}_{\beta,h}
 :=\Pp_{\beta,h}[\,\cdot\mid A_s],
 \qquad s\in\{-1,+1\},
\end{equation}
and let $\widehat\mu^s_{\beta,h}$ denote its spin marginal. We have suppressed the dependence of $ \widehat\Pp^{\,s}_{\beta,h}$ on $q$.

\begin{proposition}\label{prop:conditioned-DLR}
Let $\Lambda \Subset \Z^d$. Under $\widehat\Pp^{\,s}_{\beta,h}$, conditional on the $\sigma$-algebra
$\mathcal F_{\Lambda^c}$ generated by spins in $\Lambda^c$ and bonds in $E(\Lambda^c)$, the distribution of
$(\sigma_\Lambda,\omega_{\bar{E}(\Lambda)})$ is the joint spin-bond measure $\Pp^{\eta}_{\Lambda,\beta,h}$ defined in \eqref{eq:ES-measure-general} where $\eta_v=\sigma_v$ for each $v\notin \Lambda$. 

The measure $\widehat\mu^s_{\beta,h}$ is a Gibbs measure for the Ising model defined in \eqref{eq:FM-measure}. Fix a finite bond set $F$. Under $\widehat\Pp^{\,s}_{\beta,h}$, conditional on all spins, the bonds in $F$ are independent and
\[
 \widehat\Pp^{\,s}_{\beta,h}
 [\omega_{uv}=1\mid\sigma]
 =p\ind_{\{\sigma_u=\sigma_v\}},
 \qquad uv \in F.
\]
\end{proposition}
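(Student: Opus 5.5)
The plan has three steps. First, the unconditioned limit $\Pp_{\beta,h}$ satisfies the joint DLR property. Second, conditioning on the tail event $A_s$ preserves it. Third, the spin-marginal and conditional-bond statements follow.

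\emph{Step 1 (DLR for $\Pp_{\beta,h}$).} Fix $\Lambda\Subset\Z^d$ and take $L$ large, so that $\Lambda$ together with its edge boundary embeds in $\mathbb T_L$. From the product form \eqref{eq:ES-measure}, the torus measure factorises over edges and vertices. Conditional on the spins off $\Lambda$ and the bonds in $E(\Lambda^c)$, the law of $(\sigma_\Lambda,\omega_{\bar E(\Lambda)})$ is proportional to
\[
e^{h\sum_{u\in\Lambda}(-1)^{\abs u}\sigma_u}\prod_{uv\in\bar E(\Lambda)}\bigl[(1-p)\delta_{\omega_{uv},0}+p\delta_{\omega_{uv},1}\delta_{\sigma_u,\sigma_v}\bigr],
\]
with exterior spins as boundary condition. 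This is exactly $\Pp^{\eta}_{\Lambda,\beta,h}$. The kernel depends only on finitely many exterior spins, namely those adjacent to $\Lambda$, so it is a local continuous function. Hence for local $f$ and local $g$ that are $\mathcal F_{\Lambda^c}$-measurable, the identity
\[
\langle f g\rangle=\langle g\,\langle f\rangle^{\sigma}_{\Lambda}\rangle
\]
passes to the weak limit along the subsequence. A monotone class argument then extends it to all bounded $\mathcal F_{\Lambda^c}$-measurable $g$.

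\emph{Step 2 (conditioning on $A_s$).} By Lemma~\ref{lem:tail-sign}, $A_s$ is a tail event of positive probability $1/2$. I must check that it is measurable, modulo null sets, with respect to $\mathcal F_{\Lambda^c}$ for every finite $\Lambda$. The argument of Lemma~\ref{lem:tail-sign} gives this: changing $(\sigma_\Lambda,\omega_{\bar E(\Lambda)})$ alters the $s$-good status of only finitely many blocks, so $\ind_{A_s}$ does not depend on these coordinates. Consequently $\ind_{A_s}$ is a function of the $\mathcal F_{\Lambda^c}$ variables alone. Then
\[
\langle f g\ind_{A_s}\rangle=\langle g\ind_{A_s}\langle f\rangle^{\sigma}_\Lambda\rangle .
\]
Dividing by $\Pp_{\beta,h}[A_s]=1/2$ gives the first claim for $\widehat\Pp^{\,s}_{\beta,h}$. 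This is the main subtlety: one has to be careful that $\ind_{A_s}$ is genuinely a function of the exterior variables. That holds because the set of blocks affected is finite and $A_s$ is defined purely in terms of the existence of an infinite component.

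\emph{Step 3 (consequences).} Summing the joint kernel over $\omega_{\bar E(\Lambda)}$ yields the spin kernel \eqref{eq:FM-measure} with boundary $\eta=\sigma|_{\Lambda^c}$. This uses $(1-p)+p\delta_{\sigma_u,\sigma_v}=e^{-2\beta}e^{2\beta\delta}\propto e^{\beta\sigma_u\sigma_v}$. Moreover, since the kernel depends on exterior bonds not at all, conditioning on exterior spins alone gives the same law. Thus $\widehat\mu^s_{\beta,h}$ satisfies the DLR equations for \eqref{eq:FM-measure}. For the bond statement, take $\Lambda$ to be the set of endpoints of $F$, so that $F\subset\bar E(\Lambda)$. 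Under $\Pp^\eta_{\Lambda,\beta,h}$, given $\sigma_\Lambda$, the bonds factor as independent variables with $\Pp[\omega_{uv}=1]=p\ind_{\{\sigma_u=\sigma_v\}}$, which depends only on spins. Conditioning on the exterior bonds as well, and then averaging them out, gives the conditional law given all spins, since the resulting kernel does not depend on the exterior bonds.
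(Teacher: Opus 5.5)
Your proposal is correct and follows essentially the same route as the paper. You derive the joint DLR equations on the torus, pass them to the subsequential limit using locality of the kernel, and use that $A_s$ is a tail event (hence $\mathcal F_{\Lambda^c}$-measurable for every finite $\Lambda$) to transfer them to $\Pp_{\beta,h}[\,\cdot\mid A_s]$. You then obtain the Ising DLR equations by summing out the bonds via $(1-p)+p\delta_{\sigma_u,\sigma_v}\propto e^{\beta\sigma_u\sigma_v}$, and the conditional bond law by taking $\Lambda\supset$ endpoints of $F$ and applying the tower property, which matches the paper's argument with a few implicit details (the monotone-class extension) written out.
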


\begin{proof}
For a full configuration $(\sigma, \omega)$, let $\pi_{\Lambda}$ be the kernel that samples $(\sigma_{\Lambda}, \omega_{\bar{E}(\Lambda)})$ according to~\eqref{eq:ES-measure-general}, with exterior spins and bonds fixed. 
Then an easy calculation shows
\[\Pp_{L,\beta,h}[B]=\sum_{\eta \in \{-1,+1\}^{\mathbb{T}_L\setminus \Lambda}} \sum_{\xi \in \{0,1\}^{E(\mathbb{T}_L \setminus \Lambda)}} \pi_{\Lambda} [B \mid \eta, \xi] \Pp_{L,\beta,h}[\eta,\xi],\quad \forall \Lambda \subset \mathbb{T}_L, \text{ local event }B.\]	

Passing to the chosen subsequential limit,
\[\Pp_{\beta,h}[B]= \int \pi_{\Lambda} [B \mid \eta, \xi] \Pp_{\beta,h}[d\eta, d\xi], \qquad \forall \Lambda \Subset \Z^d, \text{ local event }B.\]
Therefore,
\begin{equation*}
	\Pp_{\beta,h}[A \cap B]= \int_A \pi_{\Lambda} [B \mid \eta, \xi] \Pp_{\beta,h}[d\eta, d\xi], \qquad \forall \text{ local event }B, \ A \in \mathcal{F}_{\Lambda^c}, \ \Lambda \Subset \Z^d.
\end{equation*}
These are the DLR equations for the joint measure. Since $A_s$ is a tail event by Lemma~\ref{lem:tail-sign},
\begin{equation}\label{eq:DLRconditional}
	\widehat\Pp^{\,s}_{\beta,h}[B]=\frac{\Pp_{\beta,h}[B \cap A_s]}{\Pp_{\beta,h}[A_s]}=\frac{\int_{A_s}  \pi_{\Lambda} [B \mid \eta, \xi] \Pp_{\beta,h}[d\eta, d\xi]}{\Pp_{\beta,h}[A_s]}=\int \pi_{\Lambda} [B \mid \eta, \xi]  \widehat\Pp^{\,s}_{\beta,h}[d\eta, d\xi]
\end{equation}
for each local event $B$ and $\Lambda \Subset \Z^d$. These are the DLR equations for $\widehat\Pp^{\,s}_{\beta,h}$, which completes the proof of the first statement.

If $B$ in \eqref{eq:DLRconditional} is a local event for the Ising model, we get
\[\widehat\mu^s_{\beta,h}[B]=\int \mu^{\FM,\eta}_{\Lambda,\beta,h}[B] \widehat\mu^s_{\beta,h}[d\eta], \qquad \forall \text{ local Ising event }B, \ \Lambda \Subset \Z^d;\]
here $\mu^{\FM,\eta}_{\Lambda,\beta,h}$ is understood as its corresponding kernel if $B$ is not supported in $\Lambda$. These are the DLR equations for the Ising model.

Finally, \eqref{eq:DLRconditional} implies that, given $\mathcal{F}_{\Lambda^c}$ and the $\sigma$-algebra generated by spins in $\Lambda$, the bonds in $\bar{E}(\Lambda)$ are independent with opening probabilities $p \ind_{\{\sigma_u=\sigma_v\}}$. For any finite bond set $F$, choosing $\Lambda$ containing its endpoints and applying the tower property complete the proof of the last statement.
\end{proof}

\section{Continuity at zero field}\label{sec:continuity}

Fix $d\ge2$ and $\beta>\beta_c(d)$, and write $\mu_h^\pm:=\mu^{\FM,\pm}_{\beta,h}$. In this section, we prove
$\mu_h^\pm\Rightarrow\mu_0^\pm$ as $h\to0$ by using a local observable to exclude any minus phase component from a limit of $\mu_h^+$. Continuity then gives nonuniqueness and the uniform lower bounds in Theorem~\ref{thm:main-AF}.

\subsection{A positive FK crossing observable}
Let $C_q^+$ be the event that $Q_0$ contains an open path joining its two faces perpendicular to $e_1$, with all spins on the path equal to $+1$. Define
\begin{equation}\label{eq:crossing-observable}
 f_q(\sigma_{Q_0})
 :=\widehat\Pp^{\,+}_{\beta,h}
   \bigl[C_q^+\mid\sigma_{Q_0}\bigr]\in[0,1].
\end{equation}
By Proposition~\ref{prop:conditioned-DLR}, the conditional law of the
bonds in $E(Q_0)$ depends only on $\sigma_{Q_0}$ and $\beta$. So $f_q$
is independent of $h$.
Recall $h_q$ from \eqref{eq:h0-choice} and
$\delta_q$ from \eqref{eq:delta-choice}.

\begin{lemma}\label{lem:crossing-observable}
The function $f_q$ is increasing in $\sigma_{Q_0}$. For every sufficiently large odd $q$,
\begin{equation}\label{eq:crossing-bounds}
 \langle f_q\rangle_{\mu_h^+}\ge1-2\delta_q,
 \quad \forall |h|\le h_q,
 \qquad
 \langle f_q\rangle_{\mu_0^-}\le\epsilon_q,
\end{equation}
where $\epsilon_q\to0$ as $q\to\infty$.
\end{lemma}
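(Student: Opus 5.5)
Given $\sigma_{Q_0}$, the bonds of $E(Q_0)$ are independent Bernoulli with parameters $p\ind_{\{\sigma_u=\sigma_v\}}$, by Proposition~\ref{prop:conditioned-DLR}. So $f_q(\sigma)$ is the probability, under this product measure, that some left-right crossing path of $Q_0$ is open and has all its vertices plus. The plan is to prove the monotonicity first, then the two bounds in \eqref{eq:crossing-bounds}, treating $\mu_h^+$ by the FKG ordering and $\mu_0^-$ by symmetry.

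\emph{Monotonicity.} The event $C_q^+$ depends on bonds joining two $+$ vertices, and such a bond is open with probability $p$. Write $\omega_e=\xi_e\ind_{\{\sigma_u=\sigma_v\}}$ with $\xi$ i.i.d.\ Bernoulli$(p)$. Then $C_q^+$ holds if and only if there is a crossing path $\gamma$ with $\xi_e=1$ on every edge of $\gamma$ and $\sigma=+1$ on every vertex of $\gamma$. Raising spins from $-$ to $+$ only enlarges the set of plus vertices, so for each fixed $\xi$ the indicator is increasing in $\sigma$. Averaging over $\xi$, which does not depend on $\sigma$, gives that $f_q$ is increasing.

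\emph{Lower bound under $\mu_h^+$.} Since $\mu_h^+$ is FKG-maximal among Gibbs measures and $\widehat\mu^+_{\beta,h}$ is a Gibbs measure by Proposition~\ref{prop:conditioned-DLR}, the monotonicity of $f_q$ gives
\[
\langle f_q\rangle_{\mu_h^+}\ge\langle f_q\rangle_{\widehat\mu^+_{\beta,h}}=\widehat\Pp^{\,+}_{\beta,h}[C_q^+].
\]
On $A_+$, if $0\in\cG_\infty$, then $Q_0$ is good. Its crossing cluster belongs to the infinite mesh cluster, which has sign $+$ on $A_+$. That crossing cluster touches both faces perpendicular to $e_1$, so it contains a plus open crossing and $C_q^+$ holds. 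Hence
\[
\widehat\Pp^{\,+}_{\beta,h}[(C_q^+)^c]\le\Pp_{\beta,h}[0\notin\cG_\infty]/\Pp_{\beta,h}[A_+]\le2\delta_q,
\]
using Lemma~\ref{lem:tail-sign} and \eqref{eq:delta-choice}.

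\emph{Upper bound under $\mu_0^-$.} At $h=0$ we use the global spin flip together with the FKG ordering. First,
\[
\langle f_q\rangle_{\mu_0^-}\le\langle f_q\rangle_{\widehat\mu^-_{\beta,0}}=\widehat\Pp^{\,-}_{\beta,0}[C_q^+].
\]
Under $\widehat\Pp^{\,-}_{\beta,0}$, a plus open crossing of $Q_0$ is an open path of $\ell^\infty$-diameter at least $q$ inside $Q_0$. If $Q_0$ is good, Definition~\ref{def:good-block} forces this path to lie in the crossing cluster of $Q_0$. If in addition $0\in\cG_\infty$, that cluster has sign $-$ on $A_-$, which is a contradiction. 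So
\[
\widehat\Pp^{\,-}_{\beta,0}[C_q^+]\le\Pp_{\beta,0}[0\notin\cG_\infty]/\Pp_{\beta,0}[A_-]\le2\delta_q=:\epsilon_q.
\]
Here the construction of Section~\ref{sec:infinite-volume} at $h=0$ is legitimate because $0\le h_q$.

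The main obstacle I expect is the FKG comparison. The inequalities $\mu_0^-\preceq\widehat\mu^-_{\beta,0}$ and $\widehat\mu^+_{\beta,h}\preceq\mu_h^+$ require the conditioned states to be genuine Gibbs measures. This is exactly what Proposition~\ref{prop:conditioned-DLR} provides, so those comparisons then follow from the extremality of $\mu_h^\pm$ stated in Section~\ref{sec:ferromagnetic-formulation}. A second point is that $f_q$ was defined via $\widehat\Pp^{\,+}$. The same product-measure formula gives the conditional law of bonds given $\sigma_{Q_0}$ under $\widehat\Pp^{\,-}$ as well, so $f_q(\sigma_{Q_0})=\widehat\Pp^{\,-}_{\beta,0}[C_q^+\mid\sigma_{Q_0}]$ and the identity $\langle f_q\rangle_{\widehat\mu^-_{\beta,0}}=\widehat\Pp^{\,-}_{\beta,0}[C_q^+]$ used above is valid.
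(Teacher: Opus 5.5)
Your proof is correct. The monotonicity coupling and the lower bound under $\mu_h^+$ are exactly the paper's argument: the FKG comparison $\widehat\mu^+_{\beta,h}\preceq\mu_h^+$, together with the inclusion $\{0\in\cG_\infty\}\cap A_+\subset C_q^+$. Your upper bound under $\mu_0^-$, however, takes a genuinely different route.

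The paper works directly with the infinite-volume Edwards--Sokal coupling $\Pp^-_{\beta,0}$ of $\mu_0^-$ with the wired FK measure $\phi^{\mathrm w}_\beta$. Since the infinite cluster is negative, a positive crossing must lie in a finite cluster of radius at least $2q$. The paper then imports the supercritical estimate $\phi^{\mathrm w}_\beta[n\le\operatorname{rad}_x\mathcal C(x)<\infty]\le Ce^{-cn}$ (Grimmett plus Bodineau for $d\ge3$, duality for $d=2$). A union bound over the face $F_q^-$ then gives $\epsilon_q=C(2q+1)^{d-1}e^{-2cq}$.

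You instead rerun the construction of Section~\ref{sec:infinite-volume} at $h=0$, which is legitimate since $0\le h_q$. You use minimality $\mu_0^-\preceq\widehat\mu^-_{\beta,0}$ together with the $h$- and $s$-independence of $f_q$. You then invoke the second clause of Definition~\ref{def:good-block}: a positive crossing has $\ell^\infty$-diameter $2q\ge q$, so in a good block it is absorbed into the crossing cluster. That cluster is negative on $\{0\in\cG_\infty\}\cap A_-$, which gives $\epsilon_q=2\delta_q$. This also decays exponentially in $q$, since $\delta_q=O(r_q)$.

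What each route buys:
\begin{itemize}
\item Your argument needs no input beyond what the paper already proves: Proposition~\ref{prop:zero-field-block}, Lemma~\ref{lem:good-geometry}, Lemma~\ref{lem:tail-sign} and Proposition~\ref{prop:conditioned-DLR}. It also makes the two bounds exactly mirror images of each other. It shows that the ``large paths belong to the crossing cluster'' clause of goodness already supplies the finite-cluster control that the paper imports separately.
\item The paper's argument for $\mu_0^-$ is independent of the torus limit, the chessboard bound and the conditioned states at zero field. It relies only on standard facts about the wired FK measure.
\end{itemize}

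One small point: the global spin flip you announce at the start of your upper-bound paragraph is never used and can be dropped.
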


\begin{proof}
Given the spins, sample a bond configuration in $\{0,1\}^{E(Q_0)}$ as
\[
 \omega_{uv}=b_{uv}\ind_{\{\sigma_u=\sigma_v\}},
 \qquad b_{uv}\text{'s} \sim \text{ independent Bernoulli}(p).
\]
For fixed $(b_{uv})$, increasing the spins preserves every positive
open path. Averaging over these variables proves the monotonicity.

On $\{0\in\cG_\infty,A_+\}$, the crossing cluster of $Q_0$ belongs
to the positive FK mesh cluster, so $C_q^+$ occurs. Therefore, \eqref{eq:delta-choice} and \eqref{eq:sign-half} give
\begin{equation}\label{eq:crossinglowerbound}
	\langle f_q\rangle_{\widehat\mu^+_{\beta,h}}
	=\widehat\Pp^{\,+}_{\beta,h}[C_q^+] \ge 1-\frac{\Pp_{\beta,h}[0\notin\cG_\infty, A_+]}
	{\Pp_{\beta,h}[A_+]}
	\ge1-2\delta_q.
\end{equation}
 
Stochastic ordering of the ferromagnetic Ising measures gives $\widehat\mu^+_{\beta,h}\preceq\mu_h^+$. So
monotonicity of $f_q$ proves the first bound.

For the second bound, let $\Pp_{\beta,0}^-$ be the zero-field infinite-volume Edwards--Sokal coupling measure with spin marginal $\mu_0^-$ and bond marginal the infinite-volume wired zero-field FK measure $\phi_\beta^{\mathrm w}$. Conditional on $\sigma_{Q_0}$, the
bonds in $E(Q_0)$ are again independent, with open probabilities
$p\ind_{\{\sigma_u=\sigma_v\}}$.
Hence
\[
 f_q(\sigma_{Q_0})
 =\Pp_{\beta,0}^-\bigl[C_q^+\mid\sigma_{Q_0}\bigr],
 \qquad
 \langle f_q\rangle_{\mu_0^-}=\Pp_{\beta,0}^-[C_q^+].
\]
Since the infinite FK cluster under $\Pp_{\beta,0}^-$  has sign $-$, every FK
crossing with positive sign lies in a finite cluster.
Write $\mathcal C(x)$ for the FK cluster of $x$ and
$\operatorname{rad}_x\mathcal C(x):=\sup_{y\in\mathcal C(x)}\|y-x\|_\infty$.
There exist $c=c(d,\beta), C=C(d,\beta) \in (0,\infty)$ such that
\[
 \phi_\beta^{\mathrm w}
 \bigl[n\le\operatorname{rad}_x\mathcal C(x)<\infty\bigr]
 \le Ce^{-cn},\qquad n\ge1.
\]
For $d\ge3$, this follows from \cite[Theorem~5.104]{Gri06} and \cite{Bod05}; for $d=2$, it follows from planar duality and subcritical exponential decay, see, e.g., \cite[Chapter~6]{Gri06}. Every such crossing starts at some $x$ on the face
$F_q^-:=\{x\in Q_0:x_1=-q\}$, and its cluster has radius at least $2q$ from $x$. A union bound therefore gives
\[
 \langle f_q\rangle_{\mu_0^-}
 \le\sum_{x\in F_q^-}\phi_\beta^{\mathrm w}
 \bigl[2q\le\operatorname{rad}_x\mathcal C(x)<\infty\bigr]
 \le C(2q+1)^{d-1}e^{-2cq}
 =:\epsilon_q\longrightarrow0.
\]
This finishes the proof of the lemma.
\end{proof}

\subsection{Identification of the zero-field limit}
Although one can derive Proposition~\ref{prop:main-FM} and Theorem~\ref{thm:main-AF} directly from \eqref{eq:crossinglowerbound} and the same-field negative-state bound, we first prove the following continuity result, which is of independent interest.

\begin{proposition}\label{prop:continuity}
	Let $d\ge2$ and $\beta>\beta_c(d)$. Then
	\begin{equation}\label{eq:local-continuity}
		\mu^{\FM,\pm}_{\beta,h}\Rightarrow\mu^{\FM,\pm}_{\beta,0}, \ \mu^{\AF,\mathrm N\pm}_{\beta,h}\Rightarrow\mu^{\AF,\mathrm N\pm}_{\beta,0}
		\qquad\text{as }h\to0,
	\end{equation}
where $\Rightarrow$ denotes weak convergence.
\end{proposition}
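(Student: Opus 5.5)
By the correspondence between $\mu^{\AF,\mathrm N\pm}$ and $\mu^{\FM,\pm}$ under the spin flip on odd vertices, it suffices to prove the ferromagnetic statement. By \eqref{eq:m-symmetry}, or more precisely the symmetry $\sigma_u\mapsto-\sigma_{u+e_1}$ combined with $h\mapsto h$, which exchanges $\mu_h^+$ and $\mu_h^-$, it suffices to treat $\mu_h^+$.

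Take a sequence $h_n\to0$. By compactness, pass to a subsequence along which $\mu_{h_n}^+\Rightarrow\nu$. The first step is to show that $\nu$ is a zero-field Gibbs measure. The specifications $\mu^{\FM,\eta}_{\Lambda,\beta,h}$ are continuous in $h$, uniformly in $\eta$, for fixed finite $\Lambda$, so the DLR equations pass to the limit.

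The second step is to show that $\nu$ is invariant under all translations of $\Z^d$. Each $\mu_h^+$ is invariant under the even sublattice. I expect this to be the main obstacle, since the staggered field breaks invariance under odd shifts. To handle it, I would use that shifting by $e_1$ maps $\mu_h^+$ to a Gibbs state for field $-h$. This state lies below the maximal state $\mu_{-h}^{+}$ in the FKG sense. Moreover, finite-volume monotonicity in $h$ with a factor $e^{2|h||\Lambda|}$ comparison is too weak, so instead I would argue via FKG sandwiching: $\mu_h^+$ and $\mu_{-h}^+$ both lie between $\mu^{\FM,+}_{\Lambda,\beta,-|h|}$ and $\mu^{\FM,+}_{\Lambda,\beta,|h|}$ in finite volume. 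Alternatively, and more simply, note that $\nu$ is even-sublattice invariant, and its average over the $e_1$-shift, $\bar\nu=\tfrac12(\nu+\nu\circ\tau_{e_1})$, is a fully translation-invariant zero-field Gibbs measure. I will work with $\bar\nu$, because the classification of translation-invariant zero-field Gibbs measures \cite{Aiz80,Hig81,Bod06} applies to it and gives $\bar\nu=\lambda\mu_0^++(1-\lambda)\mu_0^-$ for some $\lambda\in[0,1]$.

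The third step identifies $\lambda$ using Lemma~\ref{lem:crossing-observable}. Since $f_q$ is local and independent of $h$, weak convergence gives $\langle f_q\rangle_\nu\ge1-2\delta_q$. The shifted measure also satisfies this bound, because $\mu_h^+\circ\tau_{e_1}$ is the maximal state for field $-h$, and the lemma holds for all $|h|\le h_q$. Hence $\langle f_q\rangle_{\bar\nu}\ge1-2\delta_q$, and therefore
\[
1-2\delta_q\le \lambda\langle f_q\rangle_{\mu_0^+}+(1-\lambda)\langle f_q\rangle_{\mu_0^-}\le \lambda+(1-\lambda)\epsilon_q .
\]
Letting $q\to\infty$ forces $\lambda=1$, so $\bar\nu=\mu_0^+$.

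Finally, $\mu_0^+$ is extremal, and $\bar\nu$ is an average of the two Gibbs measures $\nu$ and $\nu\circ\tau_{e_1}$. Extremality therefore yields $\nu=\mu_0^+$. Since the limit does not depend on the subsequence, $\mu_h^+\Rightarrow\mu_0^+$ as $h\to0$. The minus case follows by symmetry, and the antiferromagnetic statement follows by the flip transformation.
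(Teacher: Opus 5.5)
Your proof is correct and is essentially the paper's proof. You take a subsequential limit $\nu$ and show it is a zero-field Gibbs measure, classify $\tfrac12(\nu+T_{e_1}\nu)$ via Aizenman--Higuchi/Bodineau, use $f_q$ at both $h_n$ and $-h_n$ to force $\lambda=1$, and conclude by extremality; your use of $\sigma_u\mapsto-\sigma_{u+e_1}$ instead of the global flip for the minus case is an equally valid variant. You should delete the abandoned FKG-sandwiching aside, since the staggered field is not monotone in $h$ for the FKG order and that sandwich does not hold, and all you actually use is the exact identity $T_{e_1}\mu_h^+=\mu_{-h}^+$.
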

\begin{proof}
By compactness, it suffices to identify any subsequential limit
$\mu_{h_n}^+\Rightarrow\nu$ with $h_n\to0$. By the DLR equations for $\mu_{h_n}^+$, for each $\Lambda \Subset \Z^d$ and local event $B$,
\begin{equation}\label{eq:DLRmu_h}
	\begin{aligned}
		\mu_{h_n}^+[B]&= \int \mu_{\Lambda,\beta,h_n}^{\FM,\eta}\bigl[B\bigr]\mu_{h_n}^+[d\eta]\\
		&=\int \left(\mu_{\Lambda,\beta,h_n}^{\FM,\eta}\bigl[B\bigr]-\mu_{\Lambda,\beta,0}^{\FM,\eta}\bigl[B\bigr]\right) \mu_{h_n}^+(d\eta) + \int \mu_{\Lambda,\beta,0}^{\FM,\eta}\bigl[B\bigr] \mu_{h_n}^+(d\eta);
	\end{aligned}
\end{equation}
here again $\mu_{\Lambda,\beta,h}^{\FM,\eta}$ is understood as its kernel. Since 
\[\sup_{\eta}\left |\mu_{\Lambda,\beta,h_n}^{\FM,\eta}\bigl[B\bigr]-\mu_{\Lambda,\beta,0}^{\FM,\eta}\bigl[B\bigr]\right | \rightarrow 0 \text{ as } n\rightarrow \infty,\]
we send $n\rightarrow\infty$ in \eqref{eq:DLRmu_h} to obtain
\[\nu[B]=\int \mu_{\Lambda,\beta,0}^{\FM,\eta}\bigl[B\bigr] \nu[d\eta].\]
These are the DLR equations for $\nu$. So $\nu$ is a zero-field Gibbs measure.

Write $T_z$ for translation by $z$. It acts on spin configurations by $(T_z\sigma)_u=\sigma_{u+z}$ and on measures by pushforward. Each $\mu_h^+$ is invariant under translations by $z$ with $|z|$ even, and $T_{e_1}\mu_h^+=\mu_{-h}^+$.
Then we have $\mu_{-h_n}^+ \Rightarrow T_{e_1} \nu$, and
\[
 \overline\nu:=\tfrac12(\nu+T_{e_1}\nu)
\]
is a translation-invariant zero-field Gibbs measure. The classification
of such measures gives
\begin{equation}\label{eq:mixture}
 \overline\nu=\alpha\mu_0^++(1-\alpha)\mu_0^-,
 \qquad \alpha\in[0,1],
\end{equation}
by the Aizenman--Higuchi theorem \cite{Aiz80,Hig81} for $d=2$ and Bodineau's theorem \cite{Bod06} for $d\ge3$.

Fix a sufficiently large odd $q$. We have $|h_n|\le h_q$ for all sufficiently large $n$.
Applying Lemma~\ref{lem:crossing-observable} to both $h_n$ and $-h_n$,
and using locality of $f_q$ to pass to the limit $h_n \rightarrow 0$, give
\[
 1-2\delta_q
 \le\langle f_q\rangle_{\overline\nu}
 \le\alpha+(1-\alpha)\epsilon_q.
\]
Letting $q\to\infty$ now gives $\alpha=1$. Since $\mu_0^+$ is extremal
among all Gibbs measures \cite[Lemma 6.65]{FV18},
$\overline\nu=\mu_0^+$ implies $\nu=\mu_0^+$.

Therefore $\mu_h^+\Rightarrow\mu_0^+$ as $h\rightarrow0$. Since a global spin flip maps
$\mu_{-h}^+$ to $\mu_h^-$, we have $\mu_h^-\Rightarrow\mu_0^-$ as $h\rightarrow0$.
The bipartite spin transformation then gives $\mu^{\AF,\mathrm N\pm}_{\beta,h}\Rightarrow\mu^{\AF,\mathrm N\pm}_{\beta,0}$.
\end{proof}

We are ready to prove our main result.
\begin{proof}[Proof of Proposition~\ref{prop:main-FM}
and Theorem~\ref{thm:main-AF}]
Proposition \ref{prop:continuity} implies that
\[\lim_{h\rightarrow 0} \frac{\langle \sigma_0 \rangle^{\FM,+}_{\beta,h}+\langle \sigma_{e_1} \rangle^{\FM,+}_{\beta,h}}{2}=\langle \sigma_0 \rangle^{\FM,+}_{\beta,0}>0.\]
So there is $h_0=h_0(d,\beta)>0$ such that
\[\frac{\langle \sigma_0 \rangle^{\FM,+}_{\beta,h}+\langle \sigma_{e_1} \rangle^{\FM,+}_{\beta,h}}{2}\geq \frac{\langle \sigma_0 \rangle^{\FM,+}_{\beta,0}}{2}=:c>0, \qquad \forall |h|\leq h_0. \]
Combining this with \eqref{eq:m-symmetry}, we obtain
\[\frac{\langle \sigma_0 \rangle^{\FM,+}_{\beta,h}-\langle \sigma_0 \rangle^{\FM,-}_{\beta,h}}{2}= \mu_{\beta,h}^{\FM,+}\bigl[\sigma_0=+1\bigr] - \mu_{\beta,h}^{\FM,-}\bigl[\sigma_0=+1\bigr] \geq c, \qquad \forall |h|\leq h_0. \]
Monotonicity in the boundary condition implies that for each $n\geq 1$,
\[\mu^{\FM,+}_{B_n,\beta,h}\bigl[\sigma_0=+1\bigr]
-\mu^{\FM,-}_{B_n,\beta,h}\bigl[\sigma_0=+1\bigr] \geq \mu_{\beta,h}^{\FM,+}\bigl[\sigma_0=+1\bigr] - \mu_{\beta,h}^{\FM,-}\bigl[\sigma_0=+1\bigr] \geq c, \quad |h|\leq h_0.\]
This completes the proof of Proposition~\ref{prop:main-FM}
and thus Theorem~\ref{thm:main-AF}.
\end{proof}

\section*{Acknowledgments}
This research was partially supported by the National Natural Science Foundation of China (No. 12271284 and No. 12226001). The first author is grateful to Elliott Lieb and Chuck Newman for inspiring this line of research and for initial discussions. The authors acknowledge the use of ChatGPT as an auxiliary tool in proof development, including synthesizing and completing candidate arguments from ideas and mathematical ingredients supplied by the authors, identifying potentially relevant literature, and improving the presentation. The authors are responsible for the content.

\bibliographystyle{abbrv}
\bibliography{references}

\end{document}